\numberwithin{equation}{section}
\theoremstyle{definition}
\newtheorem{claim}{Claim}[subsection]
\newtheorem{conclusion}{Conclusion}[subsection]
\newtheorem{definition}{Definition}[subsection]
\newtheorem{remark}{Remark}[subsection]
\theoremstyle{plain}
\newtheorem{corollary}{Corollary}[subsection]
\newtheorem{lemma}{Lemma}[subsection]
\newtheorem*{acknowledgement}{Acknowledgement}
\newtheorem{proposition}{Proposition}[subsection]
\newtheorem{theorem}{Theorem}[subsection]
\newcommand{\im}{\textrm{im}}
\begin{document}

\title{Two-colored noncommmutative Gerstenhaber formality \\and infinity Duflo isomorphism}
\author{Johan Alm}
\date{}
\maketitle

\begin{abstract}
Using new configuration spaces, we give an explicit construction that extends Kontsevich's Lie-infinity quasi-isomorphism from polyvector fields to Hochschild cochains to a quasi-isomorphism of A-infinity algebras equipped with actions by homotopy derivations of the Lie algebra of polyvector fields. One may term this formality a formality of two-colored noncommutative Gerstenhaber homotopy algebras. In our result the action of polyvector fields by homotopy derivations of the wedge product on polyvector fields is not the adjoint action by the Schouten bracket, but a homotopy nontrivial and, in a sense, unique deformation of that action.

As an application we give an explicit Duflo-type construction for Lie-infinity algebras that generalizes the Duflo-Kontsevich isomorphism between the Chevalley-Eilenberg cohomology of the symmetric algebra on a Lie algebra and the Chevalley-Eilenberg cohomology of the universal enveloping algebra of the Lie algebra.
\end{abstract}

\section*{\bf{Introduction}}

Kontsevich's Formality Map is best understood as a morphism of two-colored operads
  \[
   \mathcal{K}(\overline{C}(\mathbf{H}))\rightarrow\mathcal{E}nd(T_{poly},\mathcal{O}),
  \]
where \(\mathcal{K}(\overline{C}(\mathbf{H}))\) is the operad of fundamental chains of a certain cellular operad \(\overline{C}(\mathbf{H})\) of compactified configuration spaces of points in the closed upper half-plane and \(\mathcal{E}nd(T_{poly},\mathcal{O})\) is the standard two-colored endomorphism operad on formal polyvector fields, \(T_{poly}\), and formal smooth functions, \(\mathcal{O}\), on some chosen graded vector space. The content of this map of operads is an \(L_{\infty}\) map from \(T_{poly}\) to the (differential) Hochschild cochain complex of \(\mathcal{O}\). In this note we introduce a three-colored operad \(\overline{CF}(\mathbf{H})\) of compactified configuration spaces of points in a closed upper half-plane equipped with a line parallel to the boundary, and, using the same techniques as Kontsevich, a representation 
  \[
   \mathcal{K}(\overline{CF}(\mathbf{H}))\rightarrow\mathcal{E}nd(T_{poly},T_{poly},\mathcal{O})
  \]
of its fundamental chains. This representation implies
\begin{itemize}
 \item Kontsevich's \(L_{\infty}\) map \(T_{poly}\rightarrow C(\mathcal{O},\mathcal{O})\) to the Hochschild cochain complex of the associative algebra of functions,
 \item an \(L_{\infty}\) map \(T_{poly}\rightarrow C^{\geq 1}(T_{poly},T_{poly})\) to the Hochschild cochain complex of the associative algebra of polyvector fields, extending the canonical adjoint action of \(T_{poly}\) on itself,
 \item and a morphism \(T_{poly}\rightarrow C(\mathcal{O},\mathcal{O})\) of \(A_{\infty}\) algebras equipped with actions of the Lie algebra \(T_{poly}\) by homotopy derivations.
\end{itemize}
This data can be concisely encoded as a quasi-isomorphism of two-colored noncommutative \(G_{\infty}\) algebras.

The three-colored operad \(\overline{CF}(\mathbf{H})\) is closely related to the moduli spaces of quilted holomorphic disks introduced in the context of Floer homology by Mau and Woodward in \cite{MW}. The moduli spaces of quilted holomorphic disks form a two-colored operad that can be embedded as a suboperad of our three-colored operad.

As an application we give an explicit strong homotopy version of the Duflo isomorphism. This generalizes earlier work by Calaque, Kontsevich, Manchon, Pevzner, Rossi, Torossian and others; see \cite{MT,PT,CR,Sh,Kon03}. More specifically, we construct a universal and generically homotopy nontrivial \(A_{\infty}\) deformation \(C(\mathbf{g},S(\mathbf{g}))_{exotic}\) of the Chevalley-Eilenberg cochain algebra \(C(\mathbf{g},S(\mathbf{g}))\) and an \(A_{\infty}\) quasi-isomorphism \(C(\mathbf{g},S(\mathbf{g}))_{exotic}\rightarrow C(\mathbf{g},U(\mathbf{g}))\) that on the cohomology level reproduces the Duflo-Kontsevich isomorphism of Chevalley-Eilenberg cohomologies. This implies that the Duflo-Kontsevich isomorphism can \textit{not} be universally lifted to an \(A_{\infty}\) quasi-isomorphism \(C(\mathbf{g},S(\mathbf{g}))\rightarrow C(\mathbf{g},U(\mathbf{g}))\) of the Chevalley-Eilenberg cochain algebras.
\begin{acknowledgement}
It is a pleasure to thank Johan Gran{\aa}ker, Carlo A.~Rossi, Bruno Vallette and Thomas Willwacher for discussions and helpful criticism. Special thanks to my supervisor, Sergei Merkulov.
\end{acknowledgement}

\subsection*{Conventions and notation}
Given a natural number \(n\), \([n]\) denotes the set \(\{1,2,\dots,n\}\). The cardinality of a finite set \(A\) is written \(\vert A\vert\), e.g. \(\vert [n]\vert=n\). Given finite sets \(A\) and \(B\), we write \(A+B\) for their disjoint union. We customarily write \(0\) for the empty set. If \(S\) is a subset of a finite set \(A\), we customarily write \(A-S\) for the complement of \(S\) in \(A\). We write \(A/S\) for the set \(A-S+\{S\}\). (So the cardinality of \(A/S\) is \(\vert A\vert -\vert S\vert +1\).) If \(A\) is an ordered finite set, then we say \(S\subset A\) is a connected subset, and write \(S<A\), if \(s,s''\in S\) and \(s<s'<s''\in A\) implies also \(s'\in S\). The group of permutations of a finite set \(T\) is denoted \(\Sigma_T\), and \(\Sigma_{[n]}\) is denoted \(\Sigma_n\).

The terms ``chain complex'' and ``dg vector space'' are used as synonyms and refer to (possibly unbounded) cohomologically graded chain complexes over the real numbers. The \(n\)-fold suspension of a chain complex \(V\) is the chain complex \(V[n]\) with \(V[n]^d=V^{d+n}\). We employ the Koszul symmetry conventions for the category of chain complexes. Given a dg vector space \(V\), \(T(V):=\prod_{n\geq 0}V^{\otimes n}\) and \(S(V):=\prod_{n\geq 0}(V^{\otimes n})_{\Sigma_n}\). The vector space \(S(V)\) has a structure of commutative associative algebra; we denote \(S(V)\) with this algebra structure \(S^a(V)\). It can also be regarded as a cocommutative coassociative coalgebra, which we denote \(S^c(V)\).

\section{\bf{Configuration space models for {$NCG_{\infty}$} algebras and flag OCHAs}}

In this section we define four different operads in the category of cellular compact smooth manifolds with corners. Two of the operads are our invention.

\subsection{A configuration space model for {$L_{\infty}$}.} 
For an integer \(\ell\geq 2\), let \(\mathrm{Conf}_{\ell}(\mathbf{C})\) be the manifold of all injective maps of \([\ell]:=\{1,\dots,\ell\}\) into \(\mathbf{C}\). The group of translations and positive dilations of the plane, \(\mathbf{C}\rtimes\mathbf{R}_{>0}\), acts on the plane and hence (by postcomposition) on \(\mathrm{Conf}_{\ell}(\mathbf{C})\). Define \(C_{\ell}(\mathbf{C}):=\mathrm{Conf}_{\ell}(\mathbf{C})/\mathbf{C}\rtimes\mathbf{R}_{>0}\). Let \(\overline{\mathrm{Conf}}_{\ell}(\mathbf{C})\) be the real Fulton-MacPherson compactification (in the literature also called the Axelrod-Singer compactification) of \(\mathrm{Conf}_{\ell}(\mathbf{C})\), i.e.~the real oriented blow-up of \(\mathbf{C}^{\ell}\) along all diagonals. The action by translations and positive dilations is smooth; hence extends uniquely to a smooth action on \(\overline{\mathrm{Conf}}_{\ell}(\mathbf{C})\). Define \(\overline{C}_{\ell}(\mathbf{C})\) to be the quotient of \(\overline{\mathrm{Conf}}_{\ell}(\mathbf{C})\) by this action. It is a smooth compact manifold with corners with codimension one boundary
\[
 \bigsqcup_S C_{\ell-\vert S\vert+1}(\mathbf{C})\times C_S(\mathbf{C})
\]
given by products labelled by subsets \(S\subset[\ell]\) (of cardinality \(2\leq\vert S\vert<\ell\)). Moreover, the closure of \(C_{\ell-\vert S\vert+1}(\mathbf{C})\times C_S(\mathbf{C})\) in \(\overline{C}_{\ell}(\mathbf{C})\) is the product \(\overline{C}_{\ell-\vert S\vert+1}(\mathbf{C})\times \overline{C}_S(\mathbf{C})\). This means that the family of spaces \(\overline{C}(\mathbf{C})=\{\overline{C}_{\ell}(\mathbf{C})\}\) together with the inclusions of boundary components and permutation actions by permutation of points assemble into the structure of an operad. We promote it to an operad of oriented manifolds as follows. Let \(C^{std}_{\ell}(\mathbf{C})\) be the submanifold of \(\mathrm{Conf}_{\ell}(\mathbf{C})\) consisting of configurations \(x\) satisfying \(\sum_{i=1}^{\ell}x_i=0\) and \(\sum_{i=1}^{\ell}\vert x_i\vert^2=1\). The manifolds \(C_{\ell}(\mathbf{C})\) and \(C^{std}_{\ell}(\mathbf{C})\) are diffeomorphic. The manifold \(\mathrm{Conf}_{\ell}(\mathbf{C})\) is canonically oriented; hence so is \(C^{std}_{\ell}(\mathbf{C})\). We orient \(C_{\ell}(\mathbf{C})\) by pulling back the orientation on \(C^{std}_{\ell}(\mathbf{C})\). Requiring Stokes' formula (without a sign) to hold defines an orientation of the compactification \(\overline{C}_{\ell}(\mathbf{C})\). It is easy to see that all permutations of \([\ell]\) preserve the orienation.

The boundary description describes a canonical stratification and the face complexes of the stratification of each component form an operad \(\mathcal{K}(\overline{C}(\mathbf{C}))\) that this is freely generated as a graded operad by the set \(\{[C_{\ell}(\mathbf{C})]\mid \ell\geq 2\}\) of ``fundamental chains''. It is well-known that representations of \(\mathcal{K}(\overline{C}(\mathbf{C}))\) in a dg vector space \(V\) are in one-to-one correspondence with \(L_{\infty}\) structures on the suspension \(V[1]\) of \(V\); see e.g.~\cite{GJ}.

\subsection{A configuration space model for OCHA} 
Set \(\mathbf{H}:=\mathbf{R}\times\mathbf{R}_{\geq 0}\). For integers \(m,n>0\), with \(2m+n\geq2\), let \(\mathrm{Conf}_{m,n}(\mathbf{H})\) be the manifold of injections of \([m]+[n]\) into \(\mathbf{H}\) that map \([n]\) into the boundary \(\mathbf{R}\times\{0\}\) of the half-plane and \([m]\) into the interior. The group of translations along the boundary and positive dilations, \(\mathbf{R}\times\mathbf{R}_{>0}\), acts (by postcomposition) on \(\mathrm{Conf}_{m,n}(\mathbf{H})\) and we let \(C_{m,n}(\mathbf{H})\) be the quotient of this action. The embedding
\[
 \mathrm{Conf}_{m,n}(\mathbf{H})\rightarrow\mathrm{Conf}_{2m+n}(\mathbf{C})
\]
defined by sending a configuration in \([m]+[n]\hookrightarrow\mathbf{H}\) to its orbit under complex conjugation induces an embedding
\[
 C_{m,n}(\mathbf{H})\rightarrow C_{2m+n}(\mathbf{C})\subset\overline{C}_{2m+n}(\mathbf{C}).
\]
The compactification \(\overline{C}_{m,n}(\mathbf{H})\) of \(C_{m,n}(\mathbf{H})\) was in\cite{Kon03} defined as the closure under this embedding. It is a smooth manifold with corners with \(n!\) connected components. Let \(\overline{C}^+_{m,n}(\mathbf{H})\) be the connected component that has the boundary points ``compatibly ordered'', by which we mean that if \(i<j\in [n]=\{1<\dots<n\}\), then the point labelled by \(i\) is before the point labelled by \(j\) on the boundary for the orientation of the boundary induced by the orientation of the half-plane. This gives us a permutation-equivariant identification \(\overline{C}_{m,n}(\mathbf{H})\cong \overline{C}^+_{m,n}(\mathbf{H})\times\Sigma_n\). The codimension one boundary of \(\overline{C}^+_{m,n}(\mathbf{H})\) is
 \[
  \bigsqcup_I \bigl(C^+_{m-\vert I\vert+1,n}(\mathbf{H})\times C_I(\mathbf{C})\bigr)
  \sqcup\bigsqcup_{S,T}\bigl(C^+_{m-\vert S\vert,n-\vert T\vert+1}(\mathbf{H})\times C^+_{S,T}(\mathbf{H})\bigr).
 \]
Here \(C^+_{m-\vert I\vert+1,n}(\mathbf{H})\) is the interior of \(\overline{C}^+_{m-\vert I\vert+1,n}(\mathbf{H})\), etc. The union is over all subsets \(I\subset[m]\) and subsets \(S\subset [m]\), \(T<[n]\) such that all involved spaces are defined. This description of the boundary extends, via the identification \(\overline{C}_{m,n}(\mathbf{H})\cong \overline{C}^+_{m,n}(\mathbf{H})\times\Sigma_n\), to boundary descriptions for all connected components, and defines the structure of a two-coloured operad on the collection \(\overline{C}(\mathbf{H}):=\{\overline{C}_{\ell}(\mathbf{C}),\overline{C}_{m,n}(\mathbf{H})\}\), the points in the interior being inputs of one color and the points on the boundary being inputs of another color. The spaces \(\overline{C}_{m,n}(\mathbf{H})\) are defined using embeddings into spaces fo the form \(\overline{C}_{\ell}(\mathbf{C})\), for which we have chosen orientations. We orient the spaces \(\overline{C}_{m,n}(\mathbf{C})\) by the pullback orientations of these embeddings.

The dg operad of face complexes of the stratification defined by the boundary decomposition is again generated by the fundamental chains. We denote this operad of fundamental chains \(\mathcal{K}(\overline{C}(\mathbf{H}))\). A representation of it is referred to as an \textbf{open-closed homotopy algebra}, see \cite{Ho, KS06}, henceforth abbreviated as an OCHA. An OCHA consists of a pair of dg vector spaces \(V\) and \(W\), an \(L_{\infty}\) structure on \(V[1]\), an \(A_{\infty}\) structure on \(W\), and an \(L_{\infty}\) morphism from \(V\) to the Hochschild cochain complex of \(W\).

We now define flag versions of the operads \(\overline{C}(\mathbf{C})\) and \(\overline{C}(\mathbf{H})\). 

\subsection{Flag version of $\overline{C}(\mathbf{C})$.} 
Since the affine group preserves collinearity and parallel lines it makes sense to say that some points in a configuration \(x\in C_{\ell}(\mathbf{C})\) are collinear on a line parallel to the real axis. For integers \(p\geq 0\) and \(q\geq 1\) with \(p+q\geq 2\), define \(CF_{p,q}(\mathbf{C})\subset C_{[p]+[q]}(\mathbf{C})\) to be the subset of configurations for which the points labelled by \([q]\) are collinear on a line parallel to the real axis. Define \(\overline{CF}_{p,q}(\mathbf{C})\) to be its closure inside \(\overline{C}_{p+q}(\mathbf{C})\). It has \(q!\) connected components. Let \(CF^+_{p,q}(\mathbf{C})\) denote the interior of the connected component that has the collinear points compatibly ordered, by which we mean that if \(i<j\in [q]=\{1<\dots<q\}\), then the point labelled by \(i\) is before the point labelled by \(j\) on their common line for the orientation of the line induced by the orientation of the plane. Then \(CF_{p,q}(\mathbf{C})\cong CF^+_{p,q}(\mathbf{C})\times\Sigma_q\). We deduce that the codimension one boundary of the corresponding compact connected component, \(\overline{CF}^+_{p,q}(\mathbf{C})\), is
\[
 \bigsqcup_I \bigl(CF^+_{p-\vert I\vert+1,q}(\mathbf{C})\times C_I(\mathbf{C})\bigr)
 \sqcup\bigsqcup_{S,T}\bigl(CF^+_{p-\vert S\vert,q-\vert T\vert+1}(\mathbf{C})\times CF^+_{S,T}(\mathbf{C})\bigr).
\]
The union is over all subsets \(I\subset [p]\), \(S\subset[p]\), \(T<[q]\) for which all involved spaces are defined. One can use the inclusions of boundary components to define a two-colored operad structure on the collection
 \[
  \overline{CF}(\mathbf{C}):=\{\overline{C}_{\ell}(\mathbf{C}),\overline{CF}_{p,q}(\mathbf{C})\},
 \]
in a way completely analogous the previously discussed operadic structure on \(\overline{C}(\mathbf{H})\).
\begin{definition}
We call \(\overline{CF}(\mathbf{C})\) the operad of configurations on flags in the plane.
\end{definition}
We orient the spaces of the form \(\overline{CF}_{p,q}(\mathbf{C})\) by the pullback orientations of the defining embeddings into \(\overline{C}_{p+q}(\mathbf{C})\). As before one then obtains a dg operad \(\mathcal{K}(\overline{CF}(\mathbf{C}))\) of fundamental chains. It is almost identical to the operad \(\mathcal{K}(\overline{C}(\mathbf{H}))\) of OCHAs: its representations also consist of an \(L_{\infty}\) algebra \(V[1]\), an \(A_{\infty}\) algebra \(W\) and an \(L_{\infty}\) morphism from \(V\) to the Hochschild cochain complex of \(W\). The difference lies in that the latter operad contains chains \([C_{m,n}(\mathbf{H})]\) with \(n=0\) while the former operad does not contain any chain of the form \([CF_{p,q}(\mathbf{C})]\) with \(q=0\). This means that the \(L_{\infty}\) map of an OCHA contains components \(V^{\otimes p}\rightarrow W\), so called curvature terms, whilst the \(L_{\infty}\) map of a \(\mathcal{K}(\overline{C}(\mathbf{H}))\)-representation can not, i.e.~it maps into the truncated Hochschild cochain complex \(C^{\geq 1}(W,W)\).
\begin{definition}
We call \(\mathcal{K}(\overline{CF}(\mathbf{C}))\) the operad of two-colored noncommutative \(G_{\infty}\) algebras.
\end{definition}
\begin{remark}
Define a \textbf{two-colored noncommutative Gerstenhaber algebra} to be a pair \((L,A)\), where \(L[1]\) is a dg Lie algebra and \(A\) is a dg assciative algebra, together with a dg Lie algebra morphism \(L[1]\rightarrow\mathrm{Der}(A)\). Such algebras are representations of an operad \(\mathcal{NCG}\) and \(\mathcal{K}(\overline{CF}(\mathbf{C}))\) is the cobar construction on the Koszul dual cooperad of \(\mathcal{NCG}\). We prove in an appendix that \(\mathcal{NCG}\) is Koszul. Thus \(\mathcal{K}(\overline{CF}(\mathbf{C}))\) indeed deserves to be called the operad of two-colored noncommutative \(G_{\infty}\) algebras.
\end{remark}
We shall abbreviate ``two-colored noncommutative \(G_{\infty}\) algebra'' as \(NCG_{\infty}\) algebra.
\subsection{Flag version of $\overline{C}(\mathbf{H})$.} 
There is also a flag version of the operad \(\overline{C}(\mathbf{H})\), defined as follows. Let \(k,m,n\geq 0\) be integers with \(2k+m+n\geq 1\) if \(m\geq 1\) and \(k+n\geq 2\) if \(m=0\). Let \(CF_{k,m,n}(\mathbf{H})\) be the subspace of \(C_{k+m,n}(\mathbf{H})\) consisting of all configurations wherein the points labelled by \([m]\) are collinear on a line parallel to the boundary. Denote by \(\overline{CF}_{k,m,n}(\mathbf{H})\) the closure inside \(\overline{C}_{k+m,n}(\mathbf{H})\). Let \(CF^+_{k,m,n}(\mathbf{H})\) denote connected component of \(CF_{k,m,n}(\mathbf{H})\) that has both the collinear points and the boundary points compatibly ordered, i.e.~if \(i<j\) in \([m]\), then \(x_i<x_j\) on their common line of collinearity, and if \(r<s\) in \([n]\), then \(x_r<x_s\) on the boundary. The codimension one boundary of its compactification, \(\overline{CF}^+_{k,m,n}(\mathbf{H})\), has the form
\[
 \bigsqcup_I \bigl(CF^+_{k-\vert I\vert+1,m,n}(\mathbf{H})\times C_I(\mathbf{C})\bigr)
 \sqcup\bigsqcup_{P,Q}\bigl(CF^+_{k-\vert P\vert,m-\vert Q\vert+1,n}(\mathbf{H})\times CF^+_{P,Q}(\mathbf{C})\bigr)\]
\[
 \sqcup\bigsqcup_{S,T,U}\bigl(CF^+_{k-\vert S\vert,m-\vert T\vert,n-\vert U\vert+1}(\mathbf{H})\times CF^+_{S,T,U}(\mathbf{H})\bigr).
\]
The union is over all subsets \(I,P,S\subset[k]\), \(Q,T<[m]\), \(S<[n]\) for which all involved spaces are defined. These boundary factorizations define an operad structure, but now in three colors, on the collection
 \[
\overline{CF}(\mathbf{H}):=\{\overline{C}_{\ell}(\mathbf{C}),\overline{CF}_{p,q}(\mathbf{C}),\overline{CF}_{k,m,n}(\mathbf{H})\}. \]
\begin{definition}
We call \(\overline{CF}(\mathbf{H})\) the operad of configurations on flags in the half-plane.
\end{definition}
Orient the spaces \(\overline{CF}_{k,m,n}(\mathbf{H})\) by the pullback orientations of the embeddings into \(\overline{C}_{k+m,n}(\mathbf{H})\). There is an associated operad \(\mathcal{K}(\overline{CF}(\mathbf{H}))\) of fundamental chains.
\begin{definition}
We call \(\mathcal{K}(\overline{CF}(\mathbf{H}))\) the operad of flag open-closed homotopy algebras, abbreviated as the operad of flag OCHAs.
\end{definition}
\begin{lemma}
A representation of the operad of flag open closed homotopy algebras in a triple \((L,A,B)\) of chain complexes is equivalent to
\begin{itemize}
 \item an \(NCG_{\infty}\) algebra structure on \((L,A)\);
 \item an OCHA structure on \((L,B)\);
 \item and a morphism from \(A\) to \(C(B,B)\) of \(A_{\infty}\) algebras with \(L_{\infty}\) actions of \(L\) by homotopy derivations, where the Hochschild cochain complex of \(B\) is considered with the \(L\)-action induced by the OCHA structure.
\end{itemize}
\end{lemma}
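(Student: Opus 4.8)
The operad $\mathcal{K}(\overline{CF}(\mathbf{H}))$ is by construction quasi-free: as a graded operad it is freely generated by the three families of fundamental chains $[C_\ell(\mathbf{C})]$, $[CF_{p,q}(\mathbf{C})]$, $[CF_{k,m,n}(\mathbf{H})]$, and its differential is the one determined on generators by the codimension-one boundary formulas recalled above. Hence a morphism of dg operads $\mathcal{K}(\overline{CF}(\mathbf{H}))\to\mathcal{E}nd(L,A,B)$ is nothing but a degree-preserving choice of a multilinear operation for each generator, namely $L^{\otimes\ell}\to L$ for $[C_\ell(\mathbf{C})]$, $L^{\otimes p}\otimes A^{\otimes q}\to A$ for $[CF_{p,q}(\mathbf{C})]$ and $L^{\otimes k}\otimes A^{\otimes m}\otimes B^{\otimes n}\to B$ for $[CF_{k,m,n}(\mathbf{H})]$, subject to the system of quadratic equations expressing that the assignment intertwines the two differentials; the dimensions of the configuration spaces fix the internal degree of each operation, so all the content sits in those equations. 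The plan is to read off the three bullets by partitioning the generators according to which colors occur and invoking the identifications established earlier.

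First, the generators $[C_\ell(\mathbf{C})]$ and $[CF_{p,q}(\mathbf{C})]$ span a sub-dg-operad, because the boundary formula for $\overline{CF}_{p,q}(\mathbf{C})$ only produces faces of the two types $C_I(\mathbf{C})$ and $CF_{S,T}(\mathbf{C})$; this sub-dg-operad is precisely $\mathcal{K}(\overline{CF}(\mathbf{C}))$, whose representations are $NCG_\infty$ algebras by definition, so restricting the given representation to it yields the first bullet. Similarly, the generators $[C_\ell(\mathbf{C})]$ and $[CF_{k,0,n}(\mathbf{H})]$ with $m=0$ span a sub-dg-operad: when $m=0$ the middle family of boundary faces is empty, since $CF_{\cdot,Q}(\mathbf{C})$ is undefined for $Q<[0]=\emptyset$, and the third family only yields faces with $m=0$ again; moreover $CF_{k,0,n}(\mathbf{H})=C_{k,n}(\mathbf{H})$ because collinearity of the empty set of points is vacuous, so this sub-dg-operad is $\mathcal{K}(\overline{C}(\mathbf{H}))$ and restricting there gives the OCHA structure on $(L,B)$, the second bullet. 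The two sub-operads intersect exactly in $\mathcal{K}(\overline{C}(\mathbf{C}))$, so the $L_\infty$-structure on $L[1]$ supplied by the first bullet and by the second bullet is one and the same, as consistency demands.

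It then remains to interpret the generators $[CF_{k,m,n}(\mathbf{H})]$ with $m\geq1$, which lie in neither sub-operad. Reorganizing the $B$-legs Hochschild-style, these are maps $\Phi_{k,m}\colon L^{\otimes k}\otimes A^{\otimes m}\to C(B,B)$; the subfamily $\{\Phi_{0,m}\}_{m\geq1}$ is to be the $\infty$-morphism $A\to C(B,B)$, and the $\Phi_{k,m}$ with $k\geq1$ its higher compatibility data with the $L$-actions. Applying $\partial$ to $[CF_{k,m,n}(\mathbf{H})]$ and sorting the faces: the $C_I(\mathbf{C})$-faces give the $L_\infty$-operations of $L$ acting on the $L$-slots; the $CF_{P,Q}(\mathbf{C})$-faces — whose closed-plane factor has output color $A$ — feed the source $NCG_\infty$-operations (the $A_\infty$-product of $A$ and the action of $L$ on $A$) into the $A$- and $L$-slots of a single $\Phi$; and the $CF_{S,T,U}(\mathbf{H})$-faces, in which a sub-configuration collapses to the boundary, produce composition inside $C(B,B)$ — the brace operations built from the $A_\infty$-structure of $B$, together with the terms in which an $L$-carrying but $A$-free sub-configuration becomes, via the OCHA $L_\infty$-map, an element of $C(B,B)$ that is then inserted, which is exactly the $L$-action on $C(B,B)$ — as well as the terms quadratic in the $\Phi$'s, where one $\Phi$ is inserted into a $B$-leg of another. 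Grouping by the number of $L$-inputs, the $k=0$ equations are verbatim the $A_\infty$-morphism relations for $A\to C(B,B)$, and the $k\geq1$ equations say precisely that this $A_\infty$-morphism is compatible, through the homotopies $\Phi_{k,m}$, with the homotopy-derivation action of $L$ on $A$ and the induced homotopy-derivation action of $L$ on $C(B,B)$. Conversely, any triple consisting of such structures reassembles into a representation, the generators going to the corresponding structure constants, so the correspondence is bijective.

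The one point requiring a genuine computation, and hence the main obstacle, is the orientation and sign bookkeeping. Every space here is oriented by pullback along $\overline{CF}_{k,m,n}(\mathbf{H})\hookrightarrow\overline{C}_{k+m,n}(\mathbf{H})\hookrightarrow\overline{C}_{2(k+m)+n}(\mathbf{C})$, with the last oriented so that Stokes' formula holds without a sign, and one must check that the induced orientations of the codimension-one faces generate exactly the Koszul signs demanded by whatever conventions are fixed for $L_\infty$, $A_\infty$, $NCG_\infty$, their morphisms, and homotopy-derivation actions. For the faces already present in $\mathcal{K}(\overline{C}(\mathbf{C}))$, $\mathcal{K}(\overline{C}(\mathbf{H}))$ or $\mathcal{K}(\overline{CF}(\mathbf{C}))$ this is inherited from the cited identifications; the genuinely new signs live only in the mixed faces with $m\geq1$ — a closed-string component bubbling off near the inner flag line, and a sub-configuration collapsing to the boundary in the presence of inner-line points — which form a short, explicit list of local models whose orientations can be pinned down once and for all. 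A subsidiary but necessary task is to fix unambiguously the meaning of ``morphism of $A_\infty$-algebras with $L_\infty$-actions by homotopy derivations'' and of ``the $L$-action on $C(B,B)$ induced by the OCHA structure'' (the adjoint action of the OCHA $L_\infty$-map), so that the target of the above matching is well defined; with those conventions in place the argument closes.
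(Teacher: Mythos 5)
Your proposal is correct and follows essentially the same route as the paper: restrict to the two evident sub-dg-operads for the first two bullets, then apply the hom-adjunction to the $B$-legs of the mixed generators $[CF_{k,m,n}(\mathbf{H})]$ to obtain maps $L^{\otimes k}\otimes A^{\otimes m}\rightarrow\mathrm{Map}(B^{\otimes n},B)$ and match the boundary faces against the defining relations. The paper resolves the convention issue you flag at the end by identifying the third bullet precisely as a representation of the Koszul resolution $\mathcal{M}or_*(\mathcal{NCG})_{\infty}$ on $(L,A,C(B,B))$ (constructed in the appendix), and, like you, leaves the orientation and sign verification to the reader.
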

The first two listed items are obvious. Let \(\mathcal{M}or_*(\mathcal{NCG})_{\infty}\) be the Koszul resolution of the operad, \(\mathcal{M}or_*(\mathcal{NCG})\), whose representations are NCGAs \((L,A)\), \((L,A')\), with the same dg Lie algebra \(L\) appearing in both pairs, and a morphism between the two dg associative algebras respecting the actions by \(L\). See the appendix for some comments on why \(\mathcal{M}or_*(\mathcal{NCG})\) is Koszul. The third item in the list is a \(\mathcal{M}or_*(\mathcal{NCG})_{\infty}\)-representation on \((L,A,C(B,B))\). The key to this correspondence is to change from the operadic perspective that the chains \([CF_{k,m,n}(\mathbf{H})]\) are represented as maps \(L^{\otimes k}\otimes A^{\otimes m}\otimes B^{\otimes n}\rightarrow B\) to the perspective that they define maps
\[
 L^{\otimes k}\otimes A^{\otimes m}\rightarrow\mathrm{Map}(B^{\otimes n},B).
\]
(This hom-adjunction argument exactly parallels the argument used for interpreting an OCHA structure \(\{[C_{p,q}(\mathbf{H})]:L^{\otimes p}\otimes B^{\otimes q}\rightarrow B\}\) as an \(L_{\infty}\) morphism \(L\rightarrow C(B,B)\), compare with \cite{KS06,Ho}.) After this reinterpretation of the chains the argument reduces to (i) recognizing the induced \(NCG_{\infty}\) algebra structure on \((L,C(B,B))\) and (ii) comparing the differential on the chains to the differential on \(\mathcal{M}or_*(\mathcal{NCG})_{\infty}\). The details are left to the reader. We work out some more explicit details in the subsequent sections.
\begin{remark}
Consider the two-colored suboperad of \(\overline{CF}(\mathbf{H})\) on the components 
 \[
 \{\overline{CF}_{0,q}(\mathbf{C}),\overline{CF}_{0,m,0}(\mathbf{H}),\overline{CF}_{0,0,n}(\mathbf{H})\}.
 \]
It is isomorphic as an operad of smooth manifolds with corners to the operad of quilted holomorphic disks introduced by Mau and Woodward in \cite{MW}. Its operad of cellular chains is the operad of morphisms of \(A_{\infty}\) algebras.
\end{remark}
\section{\bf{(Co)operads of graphs}}

Kontsevich's proof of his Formality Conjecture and construction of a universal deformation quantization formula can be regarded\cite{Mer10} as the construction of
\begin{itemize}
 \item a map of cooperads \(\omega:\mathfrak{G}^c_{{\scriptscriptstyle \overline{C}(\mathbf{H})}}\rightarrow\Omega(\overline{C}(\mathbf{H}))\), where \(\mathfrak{G}^c_{{\scriptscriptstyle \overline{C}(\mathbf{H})}}\) is a cooperad of Feynman diagrams,
 \item and a map of operads \(\Phi:\mathfrak{G}_{{\scriptscriptstyle \overline{C}(\mathbf{H})}}\rightarrow\mathcal{E}nd(T_{poly},\mathcal{O})\) from the dual operad of Feynman diagrams.
\end{itemize}
Dualizing the map of cooperads and composing, one gets a representation
 \[
  \Phi\circ\omega^*:\mathcal{K}(\overline{C}(\mathbf{H}))\rightarrow \mathfrak{G}_{{\scriptscriptstyle \overline{C}(\mathbf{H})}}
\rightarrow\mathcal{E}nd(T_{poly},\mathcal{O})
 \]
of the fundamental chains of half-plane configurations, i.e.~an OCHA structure on \((T_{poly},\mathcal{O})\). We shall show that Kontsevich's construction can be extended, essentially without any changes, to a representation
\[
  \Phi\circ\omega^*:\mathcal{K}(\overline{CF}(\mathbf{H}))\rightarrow \mathfrak{G}_{{\scriptscriptstyle \overline{CF}(\mathbf{H})}}
\rightarrow\mathcal{E}nd(T_{poly},T_{poly},\mathcal{O})
 \]
of the operad of flag OCHAs. This is our \(NCG_{\infty}\) Formality Theorem. The new data added by extending Kontsevich's OCHA to a flag OCHA is a quasi-isomorphism \(T_{poly}\rightarrow C(\mathcal{O},\mathcal{O})\) of \(A_{\infty}\) algebras with homotopy actions by \(T_{poly}\). 

The first construction we need for our extension of the Kontsevich representation is a suitable operad \(\mathfrak{G}_{{\scriptscriptstyle \overline{CF}(\mathbf{H})}}\).

\subsection{Directed graphs.} 
Choose a finite set \(S\). Let \(fdgra^d_S\) be the set of all injectve functions \(\Gamma\) of the set \([d]\) into \((S\times S)-\Delta\), for \(\Delta\) the diagonal of \(S\). We refer to such a \(\Gamma\) as a directed graph with \(d\) edges on the set \(S\) and introduce the following terminology:
\begin{itemize}
\item \(E_{\Gamma}:=\im(\Gamma)\) is the set of edges of \(\Gamma\). We consider it as ordered by the given isomorphism with \([d]\). The element \(\Gamma(i)\in E_{\Gamma}\) is written \(e_i\) and referred to as the \(i\)th edge.
\item The function \(s_{\Gamma}:E_{\Gamma}\subset S\times S\rightarrow S\) given by projection onto the first factor \(S\) is called the source map of \(\Gamma\). The projection \(t_{\Gamma}:E_{\Gamma}\rightarrow S\) onto the second factor is called the target map of \(\Gamma\). An edge \(e\) is said to be directed from \(s_{\Gamma}(e)\) to \(t_{\Gamma}(e)\).
\item The set \(S\) is called the set of vertices of \(\Gamma\).
\item The valence of a vertex is the number of edges having that vertex as either source or target.
\item A connected component of \(\Gamma\) is a maximal (with respect to inclusions) subset \(E\subset E_{\Gamma}\) with the property that \(s_{\Gamma}(E)\cup t_{\Gamma}(E)\) and \(s_{\Gamma}(E_{\Gamma}-E)\cup t_{\Gamma}(E_{\Gamma}-E)\) are disjoint. A graph with a single connected component is said to be connected.
\end{itemize}
Let \(dgra^d_S\) be the subset of \(fdgra^d_S\) of connected graphs. There is a natural action of the permutation groups \(\Sigma_d\) and \(\Sigma_S\) on \(dgra^d_S\) by, respectively, reordering edges and permuting the vertices. Let \(sgn_d\) be the one-dimensional sign representation of \(\Sigma_d\). Define, for any finite set \(I\), of cardinality at least \(2\), the graded \(\Sigma_I\)-module
\[
\mathfrak{G}^c_{{\scriptscriptstyle \overline{C}(\mathbf{C})}}(I):=\bigoplus_{j\geq 0} (\mathbf{R}\langle dgra^d_I \rangle \otimes_{\Sigma_d} sgn_d)[-d].
\]

Elements of \(\mathfrak{G}^c_{{\scriptscriptstyle \overline{C}(\mathbf{C})}}(I)^d\) may be represented as (linear combinations of) connected graphs with \(d\) directed edges ordered up to an even permutation, \(\vert I\vert\) vertices labelled by \(I\), without double edges and without tadpoles (edges that begin and end at the same vertex).

For a finite set \(P\) and a nonempty finite set \(Q\), with \(\vert P\vert+\vert Q\vert\geq 2\), let \(dgra^d_{P,Q}\) be a copy of the subset of \(fdgra^d_{P+Q}\) consisting of those graphs which have no connected components \(E\subset E_{\Gamma}\) with \(s_{\Gamma}(E)\cup t_{\Gamma}(E)\subset P\), and put
\[
\mathfrak{G}^c_{{\scriptscriptstyle \overline{CF}(\mathbf{C})}}(P,Q):=\bigoplus_{d\geq 0}
 (\mathbf{R}\langle dgra^d_{P,Q} \rangle \otimes_{\Sigma_d} sgn_d)[-d].
\]
The vertices labelled by \(P\) of a graph in \(\mathfrak{G}^c_{{\scriptscriptstyle \overline{CF}(\mathbf{C})}}(P,Q)\) are called  \textbf{free vertices} and the vertices labelled by \(Q\) are called \textbf{collinear vertices}. Our restrictions informally say that there are no connected components with only free vertices.

Assume given a triple of finite sets \((K,M,N)\), with \(2\vert K\vert+\vert M\vert+\vert N\vert\geq 1\) if \(M\) is nonempty, and \(2\vert K\vert+\vert N\vert\geq 2\) if \(M\) is empty. Let \(dgra^d_{K,M,N}\) be a copy of the subset of \(dgra^d_{K,M+N}\) consisting of graphs \(\Gamma\) having no edge with source a vertex labelled by \(N\). Set
\[
 \mathfrak{G}^c_{{\scriptscriptstyle \overline{CF}(\mathbf{H})}}(K,M,N):=\bigoplus_{d\geq 0}
 (\mathbf{R}\langle dgra^d_{K,M,N} \rangle \otimes_{\Sigma_d} sgn_d)[-d].
\]
The vertices labelled by \(K\) of a graph in \(\mathfrak{G}^c_{{\scriptscriptstyle \overline{CF}(\mathbf{H})}}(K,M,N)\) are called \textbf{free vertices}, the vertices labelled by \(M\) are called \textbf{collinear vertices} and the vertices labelled by \(N\) are called \textbf{boundary vertices}. 

\subsection{(Co)operad structures} 
We shall now describe how the vector spaces of (equivalence classes of) graphs defined above assemble into cooperads.

Given \(\Gamma_2\in dgra^{d_2}_{S_2}\) and \(\Gamma\in dgra^{d}_{S}\), where \(d_2\leq d\) and \(S_2\subset S\), we define an \textbf{embedding of \(\Gamma_2\) as a full subgraph of \(\Gamma\)} to be an order-preserving inclusion \(f:[d_2]\hookrightarrow [d]\) which makes 
\[
[d_2]\hookrightarrow[d]\stackrel{\Gamma}{\rightarrow}S\times S  \;\;\mathrm{equal}\;\; 
[d_2]\stackrel{\Gamma_2}{\rightarrow}S_2\times S_2\subset S\times S.
\]
An embedding of \(\Gamma_2\) as a full subgraph of \(\Gamma\) is written \(f:\Gamma_2\hookrightarrow \Gamma\). Given an embedding \(f\) as above, we define \(\Gamma/\Gamma_2\in dgra^{d-d_2}_{S/S_2}\) to be the graph which, as a function, is the composition
\[
[d-d_2]\cong[d]-\im(f)\stackrel{\Gamma}{\rightarrow}S\times S \rightarrow (S/S_2)\times (S/S_2).
\]
Here the leftmost bijection is the unique order-preserving bijection and the rightmost arrow is given by the canonical projection of \(S\) onto \(S/S_2=S-S_2+\{S_2\}\) (sending elements of \(S_2\) to the element \(\{S_2\}\)). If \(\Gamma_1=\Gamma/\Gamma_2\), \(\Gamma_1\in dgra^{d_1}_{S_1+\{v\}}\) (so \(S_1=S-S_2\) and we identify the singleton sets \(\{v\}\) and \(\{S_2\}\)), then the embedding and the quotient define a bijection \([d_1]+[d_2]\rightarrow [d]\). This defines an order on \([d_1]+[d_2]\), using the order on \([d]\). This order on \([d_1]+[d_2]\) is related to the lexicographic order given by \([d_1]<[d_2]\) using a unique bijection. Define \(\epsilon(\Gamma_2,\Gamma,\Gamma_1)\) to be the sign of that bijection.
We may now define a cooperadic cocomposition
\[
 \mathfrak{G}^c_{{\scriptscriptstyle \overline{C}(\mathbf{C})}}(I_1+I_2)\rightarrow \mathfrak{G}^c_{{\scriptscriptstyle \overline{C}(\mathbf{C})}}(I_1+\{v\})\otimes \mathfrak{G}^c_{{\scriptscriptstyle \overline{C}(\mathbf{C})}}(I_2)
\]
by
\[
 \Gamma\mapsto\sum_{\Gamma_1=\Gamma/\Gamma_2}\epsilon(\Gamma_2,\Gamma,\Gamma_1)\Gamma_1\otimes\Gamma_2.
\]
The sum is over all embeddings of some \(\Gamma_2\) into \(\Gamma\).
\begin{conclusion}
The collection 
 \[
 \mathfrak{G}^c_{{\scriptscriptstyle \overline{C}(\mathbf{C})}}:=\{\mathfrak{G}^c_{{\scriptscriptstyle \overline{C}(\mathbf{C})}}(\ell)\}
 \] 
carries a cooperad structure. The componentwise linear dual, \(\mathfrak{G}_{{\scriptscriptstyle \overline{C}(\mathbf{C})}}:=\{\mathfrak{G}^c_{{\scriptscriptstyle \overline{C}(\mathbf{C})}}(\ell)^*\}\), is an operad.
\end{conclusion}
We define a full subgraph embedding of a graph \(\Gamma_2\in dgra^{d_2}_{P_2,Q_2}\) into a graph \(\Gamma\in dgra^d_{P,Q}\) exactly as before, except that we now require \(P_2\subset P\) and \(Q_2\subset Q\) (not just \(P_2+Q_2\subset P+Q\)). The quotient \(\Gamma/\Gamma_2\) is defined as before and regarded as an element of \(dgra^{d-d_2}_{P-P_2,Q/Q_2}\). The sign \(\epsilon(\Gamma_2,\Gamma,\Gamma_1)\) is also defined as before. With these conventions for subgraphs and quotients we can repeat above definition and get cocomposition maps
\[
 \mathfrak{G}^c_{{\scriptscriptstyle \overline{CF}(\mathbf{C})}}(P_1+P_2,Q_1+Q_2)\rightarrow
\mathfrak{G}^c_{{\scriptscriptstyle \overline{CF}(\mathbf{C})}}(P_1,Q_1+\{v\})\otimes
\mathfrak{G}^c_{{\scriptscriptstyle \overline{CF}(\mathbf{C})}}(P_2,Q_2).
\]
The definitions repeat word for word when \(\Gamma_2\in dgra^{d_2}_I\), \(\Gamma\in dgra^d_{P,Q}\) and \(I\subset P\), if we agree on the convention that now \(\Gamma/\Gamma_2\) belongs to \(dgra^{d-d_2}_{P/I,Q}\), defining cocompositions
\[
 \mathfrak{G}^c_{{\scriptscriptstyle \overline{CF}(\mathbf{C})}}(P+I,Q)\rightarrow\mathfrak{G}^c_{{\scriptscriptstyle \overline{CF}(\mathbf{C})}}(P+\{v\},Q)\otimes
\mathfrak{G}^c_{{\scriptscriptstyle \overline{C}(\mathbf{C})}}(I).
\]
\begin{conclusion}
The collection \(\mathfrak{G}^c_{{\scriptscriptstyle \overline{CF}(\mathbf{C})}}:=\{\mathfrak{G}^c_{{\scriptscriptstyle \overline{C}(\mathbf{C})}}(\ell),\mathfrak{G}^c_{{\scriptscriptstyle \overline{CF}(\mathbf{C})}}(p,q)\}\) carries a cooperad structure. The componentwise linear dual, \(\mathfrak{G}_{{\scriptscriptstyle \overline{CF}(\mathbf{C})}}:=\{\mathfrak{G}^c_{{\scriptscriptstyle \overline{C}(\mathbf{C})}}(\ell)^*,\mathfrak{G}^c_{{\scriptscriptstyle \overline{CF}(\mathbf{C})}}(p,q)^*\}\), is an operad.

With the evident conventions for how to color the new vertex obtained by collapsing an embedded subgraph the same formulas define a cooperad structure on the collection 
 \[
\mathfrak{G}^c_{{\scriptscriptstyle \overline{CF}(\mathbf{H})}}:=\{\mathfrak{G}^c_{{\scriptscriptstyle \overline{C}(\mathbf{C})}}(\ell),\mathfrak{G}^c_{{\scriptscriptstyle \overline{CF}(\mathbf{C})}}(p,q), \mathfrak{G}^c_{{\scriptscriptstyle \overline{CF}(\mathbf{H})}}(k,m,n)\}.
 \]
Its linear dual, denoted \(\mathfrak{G}_{{\scriptscriptstyle \overline{CF}(\mathbf{H})}}\), is an operad. 
\end{conclusion}

\subsection{de Rham field theory}
Given a pair of distinct indices \(i,j\in[k]+[m]+[n]\) we follow Kontsevich and define a function 
 \[
 \phi^h_{i,j}:CF_{k,m,n}(\mathbf{H})\rightarrow\mathbf{S}^1,\,
 x+\mathbf{R}\rtimes\mathbf{R}_{>0} \mapsto \mathrm{Arg}\biggl(\frac{x_j-x_i}{x_j-\overline{x}_i}\biggr).
 \]
Here a barred variable denotes the complex conjugate variable. The function is smooth and extends to a smooth function defined on the compactified configuration space. Let \(\vartheta\) be the homogeneous normalized volume form on \(\mathbf{S}^1\).

Given a graph \(\Gamma\in dgra^d_{k,m,n}\), define
 \[
 \omega_{\Gamma} := \wedge_{i=1}^d (\phi^h_{s_{\Gamma}(e_i),t_{\Gamma}(e_i)})^*\vartheta.
 \]
The form \(\omega_{\Gamma}\) is a smooth closed differential form of degree \(d\) on \(\overline{CF}_{k,m,n}(\mathbf{H})\). We extend \(\omega\) to a map of dg vector spaces \(\mathfrak{G}^c_{{\scriptscriptstyle \overline{CF}(\mathbf{H})}}(k,m,n)\rightarrow\Omega(\overline{CF}_{k,m,n}(\mathbf{H}))\).

Define similarly, for indices \(i,j\in[\ell]\), \(\phi_{i,j}:C_{\ell}(\mathbf{C})\rightarrow\mathbf{S}^1\) by
 \[
  \phi_{i,j}:x+\mathbf{C}\rtimes\mathbf{R}_{>0} \mapsto \mathrm{Arg}(x_j-x_i).
 \]
The function \(\phi\) extends to the compactification. For a graph \(\Gamma\in dgra^d_{\ell}\), let
 \[
  \omega_{\Gamma} := \wedge_{i=1}^d (\phi_{s_{\Gamma}(e_i),t_{\Gamma}(e_i)})^*\vartheta.
 \]
This allows us to define maps of dg vector spaces \(\omega:\mathfrak{G}^c_{{\scriptscriptstyle \overline{C}(\mathbf{C})}}(\ell)\rightarrow\Omega(\overline{C}_{\ell}(\mathbf{C}))\). By identifying \(\overline{CF}_{p,q}(\mathbf{C})\) with a subset of \(\overline{C}_{p+q}(\mathbf{C})\) and \(dgra^d_{p,q}\) with a subset of \(dgra^d_{p+q}\) we can use this to define maps of dg vector spaces \(\omega:\mathfrak{G}^c_{{\scriptscriptstyle \overline{CF}(\mathbf{C})}}(p,q)\rightarrow\Omega(\overline{CF}_{p,q}(\mathbf{C}))\) as well.

In all cases we interpret the form associated to a graph without edges as the function identically equal to \(1\).
\begin{claim}
The de Rham complex functor \(\Omega\) is only comonoidal up to quasi-isomorphism with respect to the usual tensor product of dg vector spaces. Hence \(\Omega(\overline{CF}(\mathbf{H}))\) is only a cooperad up to quasi-isomorphisms. This inconvenience can be ignored by working with a completed tensor product, regarding it, say, as a cooperad in the category of chain complexes of nuclear Fr\'echet spaces. Our mapping \(\omega:\mathfrak{G}^c_{{\scriptscriptstyle \overline{CF}(\mathbf{H})}}\rightarrow\Omega(\overline{CF}(\mathbf{H}))\) is a morphism of cooperads in this category of cooperads.
\end{claim}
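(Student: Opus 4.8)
The plan is to verify two independent things: (i) that $\omega$ is a chain map into the de Rham complex of each configuration space, i.e.\ $d\omega_\Gamma$ corresponds to the image under $\omega$ of the differential applied to $\Gamma$ in $\mathfrak{G}^c_{{\scriptscriptstyle \overline{CF}(\mathbf{H})}}$; and (ii) that $\omega$ intertwines the cooperadic cocompositions with the restriction-of-forms maps induced by the inclusions of codimension-one boundary strata. Item (i) is standard: each form $\omega_\Gamma = \wedge_{i=1}^d (\phi^h_{s(e_i),t(e_i)})^*\vartheta$ is a wedge of pullbacks of the closed volume form $\vartheta$ on $S^1$, hence closed on the open stratum $CF_{k,m,n}(\mathbf{H})$; since the angle functions $\phi^h_{i,j}$ extend smoothly to the compactification $\overline{CF}_{k,m,n}(\mathbf{H})$, the form $\omega_\Gamma$ extends smoothly and remains closed there. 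One checks that adding an edge to $\Gamma$ corresponds, on the algebraic side, to the (co)differential of the graph complex, so that the zero-differential on the graphs (recall these are graph \emph{co}operads with no internal differential, only a homological degree equal to the number of edges) matches $d\omega_\Gamma = 0$; this is exactly the ``$d\omega = \omega d$'' bookkeeping already present in Kontsevich's original argument, and it goes through verbatim here because our graphs carry no extra structure beyond the coloring of vertices.

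The heart of the matter is item (ii). Here I would proceed stratum by stratum according to the three types of codimension-one boundary faces listed for $\overline{CF}^+_{k,m,n}(\mathbf{H})$: (a) the faces $CF^+_{k-|I|+1,m,n}(\mathbf{H})\times C_I(\mathbf{C})$ where a cluster $I$ of free vertices collides in the interior; (b) the faces $CF^+_{k-|P|,m-|Q|+1,n}(\mathbf{H})\times CF^+_{P,Q}(\mathbf{C})$ where free and collinear vertices collide away from the boundary line; and (c) the faces $CF^+_{k-|S|,m-|T|,n-|U|+1,n}(\mathbf{H})\times CF^+_{S,T,U}(\mathbf{H})$ where a subconfiguration, including boundary vertices, collapses onto the boundary. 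For each type one restricts $\omega_\Gamma$ to the boundary face and uses the classical fact (again from Kontsevich, and from the Fulton--MacPherson formalism) that the restriction of a pullback angle form $(\phi^h_{i,j})^*\vartheta$ to such a face either factors through the first factor, factors through the second factor, or vanishes, according to whether the endpoints $i,j$ both survive in the quotient configuration, both lie in the collapsing cluster, or are split between the two. Summing over graphs $\Gamma$ with a fixed behavior on a fixed cluster, the restriction of $\omega_\Gamma$ is precisely $\sum \pm\, \omega_{\Gamma_1}\boxtimes\omega_{\Gamma_2}$ over the subgraph-embedding/quotient pairs $(\Gamma_1,\Gamma_2)$, which is by definition $\omega\otimes\omega$ applied to the cocomposition of $\Gamma$. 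The one genuinely new case relative to \cite{Kon03} is the bookkeeping around the \emph{collinear} vertices: I must check that the three decorations --- free, collinear, boundary --- are respected by the collapsing, i.e.\ that the new vertex $\{S_2\}$ of the quotient receives the color dictated by the ``evident conventions'' of the preceding Conclusion, and that the graph-theoretic restrictions (no component of only free vertices; no edge sourced at a boundary vertex) are automatically compatible with the geometry, so that no spurious strata appear.

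The main obstacle, and the reason this is phrased as a \emph{claim} rather than a lemma with a full proof, is the functoriality defect of $\Omega$: the de Rham complex of a product of noncompact (or even compact) manifolds is only quasi-isomorphic to the completed tensor product of the de Rham complexes, so $\Omega(\overline{CF}(\mathbf{H}))$ is not literally a cooperad in plain dg vector spaces. The fix I would spell out is to work in the symmetric monoidal category of chain complexes of nuclear Fr\'echet spaces with the completed projective tensor product $\widehat{\otimes}$; there the natural map $\Omega(X)\,\widehat{\otimes}\,\Omega(Y)\to\Omega(X\times Y)$ is an isomorphism onto its image in the relevant degrees (and in any case a quasi-isomorphism, by the Künneth theorem for de Rham cohomology of manifolds with corners), and the boundary-restriction maps are continuous, so the cooperad axioms hold on the nose. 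Checking the remaining details --- continuity and nuclearity of all the structure maps, and compatibility of the completed tensor product with the boundary stratification --- is routine functional analysis and I would only indicate it, not carry it out; the geometric content is entirely in item (ii) above.
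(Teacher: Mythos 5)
The paper itself offers no proof of this claim; it defers to analogous statements in \cite{Mer10}. Your overall architecture --- closedness of each $\omega_\Gamma$, compatibility of boundary restriction with the graph cocomposition, and the nuclear Fr\'echet / completed tensor product fix --- is the standard route and is the right one. However, the central geometric lemma is misstated in a way that, taken literally, breaks the computation. You assert that the restriction of $(\phi^h_{i,j})^*\vartheta$ to a codimension-one face \emph{vanishes} whenever the edge is split between the collapsing cluster and its complement. That is false for collapses away from the boundary of the half-plane (your face types (a) and (b)): if $i$ lies in the collapsing cluster $S_2$ and $j$ outside, then on that face $\phi^h_{i,j}$ converges to $\phi^h_{c,j}$ with $c$ the collapse point, so the form restricts to the pullback from the first factor of the angle form of the edge from the new vertex $\{S_2\}$ to $j$ in the quotient graph $\Gamma/\Gamma_2$. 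This is exactly why the cocomposition defined in the paper keeps the split edges, reattached to the new vertex, in $\Gamma_1=\Gamma/\Gamma_2$; if split edges really vanished, only graphs with no edges between $S_2$ and its complement would contribute and the boundary restriction would fail to reproduce $\Gamma\mapsto\sum\epsilon(\Gamma_2,\Gamma,\Gamma_1)\,\Gamma_1\otimes\Gamma_2$. The correct statement is a dichotomy: edges internal to the cluster pull back from the second factor; all other edges pull back from the first factor, with any endpoint in $S_2$ replaced by the new vertex.

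The genuine vanishing phenomenon occurs only on faces of type (c), where the cluster collapses onto the boundary $\mathbf{R}\times\{0\}$: since $\phi^h_{i,j}=\mathrm{Arg}\bigl((x_j-x_i)/(x_j-\overline{x}_i)\bigr)\equiv 0$ when $x_i$ is real, a split edge \emph{sourced} inside such a cluster has vanishing form there --- matching the combinatorial exclusion of graphs in $dgra^d_{K,M,N}$ having an edge with source labelled by $N$ --- whereas split edges \emph{targeted} inside the cluster survive as edges into the new boundary vertex. Replacing your trichotomy by this case analysis repairs the key step, and the remainder of your outline (the sign bookkeeping via $\epsilon$, the check that the coloring conventions and the ``no component of only free vertices'' condition are respected, and the functional-analytic remarks, where indeed $\Omega(X)\,\widehat{\otimes}\,\Omega(Y)\cong\Omega(X\times Y)$ for compact manifolds with corners) goes through as you indicate.
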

We shall not prove this statement as it is a consequence of similar statements in \cite{Mer10}. 

\subsection{A representation of the operad of graphs}
Fix for the remainder of this section a graded vector space \(V\), assumed finite-dimensional in each degree.

Define the \textbf{formal smooth functions on} \(V\), denoted \(\mathcal{O}\), to be the completed symmetric algebra on \(V^*\). Define the \textbf{formal polyvector fields on} \(V\), to be  denoted \(T_{poly}\), as the completed symmetric algebra on \(V^*\oplus V[-1]\). Note that \(\mathcal{O}\) is a subalgebra of \(T_{poly}\).

Let \(\tau\) be the image of \(id_V\) under \(V\times V^*\rightarrow V\otimes V^*[1]\cong (V^*\otimes V[-1])^*\) and regard it as a map \(V^*\otimes V[-1]\rightarrow\mathbf{R}\). It extends uniquely to a derivation of \(T_{poly}\). This derivation defines an endomorphism (of degree \(-1\)) of \(T_{poly}\otimes T_{poly}\) which we again denote \(\tau\). The \textbf{Schouten bracket} on \(T_{poly}\) is the map
\[
 [\,,\,]_S:=m\circ\tau\circ(id+(21)),
\]
where \(m\) denotes the product on \(T_{poly}\). It is well-known that the Schouten bracket is a (degree \(-1\)) Lie bracket. Given a finite set \(S\) and distinct elements \(s,t\in S\), define \(\tau_{s,t}\) to be the endomorphism of \(T_{poly}^{\otimes S}\) acting as \(\tau\) on the ``\(s\)th times \(t\)th factors'' and as the identity on all others.

For a graph \(\Gamma\in dgra^d_{k,m,n}\), let
 \[
 \Phi_{\Gamma}:= \varepsilon\circ m\circ\bigcirc_{i=1}^d\tau_{s_{\gamma}(e_i),t_{\Gamma}(e_i)}:
T_{poly}^{\otimes k}\otimes T_{poly}^{\otimes m}\otimes \mathcal{O}^{\otimes n}\rightarrow \mathcal{O}.
 \]
Here \(\varepsilon\) is the projection of \(T_{poly}\) onto \(\mathcal{O}\) defined by the projection \(V^*\oplus V[-1]\rightarrow V^*\), we regard
 \[
  T_{poly}^{\otimes k}\otimes T_{poly}^{\otimes m}\otimes \mathcal{O}^{\otimes n}\subset T_{poly}^{\otimes k+m+n},
 \]
and \(m:T_{poly}^{k+m+n}\rightarrow T_{poly}\) is the product. For a graph \(\Gamma\in dgra^d_{\ell}\) we define
 \[
  \Phi_{\Gamma}:= m\circ\bigcirc_{i=1}^d\tau_{s_{\gamma}(e_i),t_{\Gamma}(e_i)}:T_{poly}^{\otimes\ell}\rightarrow T_{poly}.
 \]
For a graph \(\Gamma\in dgra^d_{p,q}\) we use the same formula,
 \[
  \Phi_{\Gamma}:= m\circ\bigcirc_{i=1}^d\tau_{s_{\gamma}(e_i),t_{\Gamma}(e_i)}:
T_{poly}^{\otimes p}\otimes T_{poly}^{\otimes q}\rightarrow T_{poly}.
 \]
\begin{claim}
One verifies that these definitions define a morphism of dg operads 
 \[
 \Phi:\mathfrak{G}_{{\scriptscriptstyle \overline{CF}(\mathbf{H})}}\rightarrow\mathcal{E}nd(T_{poly},T_{poly},\mathcal{O}).
 \] 
\end{claim}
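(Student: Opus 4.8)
The plan is to treat this exactly as Kontsevich's original verification that graphs represent $\mathcal{E}nd(T_{poly},\mathcal{O})$ — carried out for the two-coloured case in \cite{Mer10} — and to observe that the flag colours change nothing essential. Since $V$ carries no differential, $\mathcal{O}$, $T_{poly}$, and hence $\mathcal{E}nd(T_{poly},T_{poly},\mathcal{O})$, all have zero differential; the graph spaces $\mathfrak{G}^c$ are defined as graded vector spaces with no differential, so $\mathfrak{G}_{{\scriptscriptstyle \overline{CF}(\mathbf{H})}}$ carries the zero differential as well (consistently, the forms $\omega_\Gamma$ are closed). Thus ``morphism of dg operads'' reduces here to ``morphism of graded operads'', and there is no chain-map condition. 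So I would check two things: that each $\Phi_\Gamma$ descends to the quotient defining $\mathfrak{G}^c$ and respects the colouring, and that $\Phi$ intertwines operadic composition.

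For the first point, the operator $\tau$ has degree $-1$, so distinct edge-operators $\tau_e$ and $\tau_{e'}$ anticommute in $\mathrm{End}(T_{poly}^{\otimes S})$ up to the Koszul sign $-1$ of transposing two degree-$(-1)$ operators (concretely, $\tau$ always extracts a $V[-1]$-component from its source slot and a $V^*$-component from its target slot, and these extractions pairwise anticommute inside the completed symmetric algebra). Hence reordering the edges of $\Gamma$ by $\sigma\in\Sigma_d$ multiplies $\bigcirc_{i=1}^{d}\tau_{e_i}$ by $\mathrm{sgn}(\sigma)$, precisely the $sgn_d$-twist in the definition of $\mathfrak{G}^c_{{\scriptscriptstyle \overline{CF}(\mathbf{H})}}$; the exclusion of double edges is consistent with this (such a graph equals minus itself) and the exclusion of tadpoles is exactly what keeps $\tau_{s,t}$ defined. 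Equivariance under the symmetries of the free, collinear and boundary vertices is immediate from the symmetric construction of $m$ and $\tau$. Respect for colours amounts to: an edge whose source is a boundary vertex asks $\tau$ to pull a $V[-1]$-component out of a factor lying in $\mathcal{O}$, which has none, so such a graph acts as zero — matching the restriction defining $dgra^d_{K,M,N}$ — and the ``no connected component on free vertices alone'' restriction in $dgra^d_{P,Q}$ and $dgra^d_{K,M,N}$ is what makes $\varepsilon\circ m\circ\bigcirc\tau_{e_i}$ an operation of the three-coloured endomorphism operad; all these restrictions are stable under the cocompositions described above.

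For compatibility with composition — the heart of the matter — the key algebraic fact is that $\tau_{s,t}$ is a bi-derivation (a derivation in its source slot and in its target slot separately), because it is assembled from the single contraction $V^*\otimes V[-1]\to\mathbf{R}$; hence it obeys a Leibniz rule with respect to $m$: merging tensor factors via $m$ and then contracting an edge into the merged factor equals the sum, over the original factors, of contracting into each and then merging. The operadic composition in $\mathfrak{G}_{{\scriptscriptstyle \overline{CF}(\mathbf{H})}}$ dual to the cocompositions above is ``insert $\Gamma_2$ at a vertex $v$ of $\Gamma_1$'', i.e. $\sum_{\Gamma/\Gamma_2=\Gamma_1}\epsilon(\Gamma_2,\Gamma,\Gamma_1)\,\Gamma$, the sum running over all redistributions of the edges of $\Gamma_1$ at $v$ among the vertices of $\Gamma_2$. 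On the endomorphism side, substituting $\Phi_{\Gamma_2}$ into the $v$-slot of $\Phi_{\Gamma_1}$ makes $\Phi_{\Gamma_2}$ deliver, through its own copy of $m$, a single element of $T_{poly}$ to that slot; the $\tau$'s of $\Gamma_1$ incident to $v$ then act on it and, by the Leibniz rule, distribute over the factors supplied by the vertices of $\Gamma_2$, while associativity of $m$ collapses the nested products — producing exactly $\sum_{\Gamma/\Gamma_2=\Gamma_1}\Phi_\Gamma$. I would carry this out for each of the cocompositions recorded above: composition of closed graphs; insertion of a closed graph at a free vertex; of a plane-flag graph at a collinear vertex; and, in the half-plane case, of a half-plane-flag graph at a boundary vertex — in each case the colour restrictions of the previous paragraph guaranteeing that only admissible graphs $\Gamma$ occur.

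The hard part is the sign bookkeeping: one must verify that the combinatorial sign $\epsilon(\Gamma_2,\Gamma,\Gamma_1)$ — the sign of the permutation relating the edge-order of $\Gamma$ to the order ``$\Gamma_1$-edges, then $\Gamma_2$-edges'' — matches the Koszul sign that appears when the ordered composite $\bigcirc_{i=1}^{d}\tau_{e_i}$ is reshuffled, using the degree-$(-1)$ anticommutativity of the $\tau$'s and the fact that those coming from $\Gamma_2$ act entirely ``inside'' the $v$-slot, into the composite computing $\Phi_{\Gamma_1}$ with $\Phi_{\Gamma_2}$ inserted at $v$. For the purely closed part and the flagless half-plane part this is exactly Kontsevich's computation as written out in \cite{Mer10}, and the flag colours add nothing new: from the point of view of $\Phi$ the collinear and boundary vertices are just further copies of the closed colour constrained by the restrictions above, and those restrictions are manifestly preserved by every cocomposition. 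I would therefore only indicate the computation and refer to \cite{Mer10} for the detailed signs.
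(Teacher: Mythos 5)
The paper offers no proof of this claim beyond the words ``one verifies'' and the earlier pointer to \cite{Mer10}, so there is nothing to compare against step by step; your sketch is the verification the author clearly intends. Your two key algebraic inputs are correct: distinct edge-operators $\tau_{s,t}$ are odd and Koszul-anticommute, which is exactly what makes $\Phi_\Gamma$ descend through the $sgn_d$-twist (and kills double edges), and compatibility with insertion is the Leibniz rule for the biderivation $\tau$ against the product $m$, together with the facts that $\varepsilon$ is an algebra map commuting with the derivatives in the $V^*$-directions and that derivatives in the $V[-1]$-directions annihilate $\mathcal{O}$ (matching the ``no edge sourced at a boundary vertex'' restriction).

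The one step you should not wave through is the assertion that the admissibility restrictions are ``manifestly preserved'' so that ``only admissible graphs $\Gamma$ occur'' in the Leibniz expansion. The tadpole, injectivity and boundary-source conditions are indeed automatic, but the condition ``no connected component supported entirely on free vertices'' is not: with the paper's definitions (a connected component is a subset of $E_\Gamma$, so isolated vertices impose no constraint), the edgeless graph $\Gamma_1\in dgra^0_{\{v\},\{1\}}$ is admissible, with $\Phi_{\Gamma_1}=m$; inserting the connected closed graph $\Gamma_2=e_{12}\in dgra^1_{[2]}$ at the free vertex $v$ produces only the graph whose single edge joins the two free vertices, which is excluded from $dgra^1_{\{1,2\},\{1\}}$. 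Hence the dual operadic composite $\Gamma_1^*\circ_v\Gamma_2^*$ vanishes in $\mathfrak{G}_{{\scriptscriptstyle\overline{CF}(\mathbf{C})}}$, while $\Phi_{\Gamma_1}\circ_v\Phi_{\Gamma_2}=m\circ(m\circ\tau_{1,2}\otimes\mathrm{id})\neq 0$. So a complete proof must either strengthen the admissibility condition (e.g.\ exclude isolated free vertices, so that the component through $v$ always contains a collinear or boundary vertex which survives the merge) or otherwise account for the excluded graphs, on which $\Phi$ does not vanish. This is arguably an imprecision in the paper's definitions rather than a flaw in your strategy, but it is precisely the kind of point the phrase ``one verifies'' hides, and your proof as written asserts it rather than proves it.
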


\section{\bf{$NCG_{\infty}$ formality}}

Combining the previous subsections, we have a representation
 \[
  \Phi\circ\omega^*:\mathcal{K}(\overline{CF}(\mathbf{H}))\rightarrow \mathfrak{G}_{{\scriptscriptstyle \overline{CF}(\mathbf{H})}}
\rightarrow\mathcal{E}nd(T_{poly},T_{poly},\mathcal{O}). 
 \]
Since \(\mathcal{K}(\overline{CF}(\mathbf{H}))\) is quasi-free the representation consists of a family of maps, one for each generator of \(\mathcal{K}(\overline{CF}(\mathbf{H}))\), satisfying some quadratic identities coming from the boundary differential on \(\mathcal{K}(\overline{CF}(\mathbf{H}))\). We shall denote the components as follows:
\begin{itemize}
 \item \(\lambda_{\ell}:=\Phi\circ\omega^*([C_{\ell}(\mathbf{C})])\in\mathrm{Map}^{3-2\ell}(T_{poly}^{\otimes\ell},T_{poly})\), for \(\ell\geq 2\).
 \item \(\nu_p:=\Phi\circ\omega^*([CF^+_{0,q}(\mathbf{C})])\in \mathrm{Map}^{2-q}(T_{poly}^{\otimes q},T_{poly})\) for \(q\geq 2\).
 \item \(\mu_n:=\Phi\circ\omega^*([CF^+_{0,0,n}(\mathbf{H})])\in\mathrm{Map}^{2-n}(\mathcal{O}^{\otimes n},\mathcal{O})\) for \(n\geq 2\).
 \item \(\mathcal{V}_{p,q}:=\Phi\circ\omega^*([CF^+_{p,q}(\mathbf{C})])\in\mathrm{Map}^{2-2p-q}(T_{poly}^{\otimes p}\otimes T_{poly}^{\otimes q},T_{poly})\) for \(p,q\geq 1\).
 \item \(\mathcal{U}_{k,n}:=\Phi\circ\omega^*([CF^+_{k,0,n}(\mathbf{H})])\in\mathrm{Map}^{2-2k-n}(T_{poly}^{\otimes k}\otimes\mathcal{O}^{\otimes n},\mathcal{O})\) for \(k\geq 1\), \(n\geq 0\).
 \item \(\mathcal{Z}_{k,m,n}:=\Phi\circ\omega^*([CF^+_{k,m,n}(\mathbf{H})])\in\mathrm{Map}^{1-2k-m-n}(T_{poly}^{\otimes k}\otimes T_{poly}^{\otimes m}\otimes\mathcal{O}^{\otimes n},\mathcal{O})\) for \(k\geq 0\), \(m\geq 1\), \(n\geq 0\).
\end{itemize}
Recall that the Hochschild cochain complex \(C(A,A)\) of an \(A_{\infty}\) algebra \(A\) is the dg vector space
\[
 \prod_{r\geq 0}\mathrm{Map}(A[1]^{\otimes r},A).
\]
(This is the completed Hochschild cochain complex. As our results are of a formal nature we shall always work with completed complexes.) The brace operations on the Hochschild cochains complex are maps
 \[
  (\,)\{\dots\}_p : C(A,A)\otimes \bigotimes_{i=1}^p C(A,A)\rightarrow C(A,A),\; p\geq 1,
 \]
defined for \(x\in\mathrm{Map}(A[1]^{\otimes r},A)\), \(x_i\in\mathrm{Map}(A[1]^{\otimes r_i},A)\), \(1\leq i\leq p\leq r\), \(n=r+r_1+\dots+r_p-p\), by
 \[
  x\{x_1,\dots,x_p\}_p(a_1,\dots,a_n) 
= \sum_{1\leq i_1<\dots<i_p<r} \pm x(a_1,\dots,a_{i_1},x_1(a_{i_1},\dots),\dots,a_{i_p},x_p(a_{i_p},\dots),\dots,a_n).
 \]
The Gerstenhaber bracket on the Hochschild cochain complex is the operation
 \[
  [x,y]_G:=x\{y\}_1\pm y\{x\}_1.
 \]
It is a graded Lie bracket of degree \(-1\) in our grading on the Hochschild cochain complex. Denote by \(C^{\geq 1}(A,A)\) the subspace \(\prod_{r\geq 1}\mathrm{Map}(A[1]^{\otimes r},A)\). It is a graded Lie subalgebra. Set \((\,)\{\dots\}:=\sum_{p\geq 1}(\,)\{\dots\}_p\) and define
 \[
  br:C(A,A)\rightarrow C^{\geq 1}(C(A,A),C(A,A)),x\mapsto ()\{x\}_1+x\{\dots\}.
 \]
One verifies that this is a map of graded Lie algebras.

An \(A_{\infty}\) structure on \(A\) is a Maurer-Cartan element \(m=d+m_2+\dots\) in \(C^{\geq 1}(A,A)\). The differential \([m,\,]_G\) makes the Hochschild cochain complex a dg Lie algebra. It is also an \(A_{\infty}\) algebra with \(A_{\infty}\) structure the Maurer-Cartan element \(\cup^m:=br(m)\) of \(C^{\geq 1}(C(A,A),C(A,A))\). When \(A\) has a given \(A_{\infty}\) structure \(m\) we shall usually write \(C(m)\) for \(C(A,A)\) with differential \([m,\,]_G\).

The interpretation of the components of our representation of \(\mathcal{K}(\overline{CF}(\mathbf{H}))\) is that
\begin{itemize}
 \item \(\lambda=\{\lambda_{\ell}\}\) is an \(L_{\infty}\) structure on \(T_{poly}\).
 \item \(\nu=\{\nu_p\}\) is an \(A_{\infty}\) structure on \(T_{poly}\).
 \item \(\mu=\{\mu_n\}\) is an \(A_{\infty}\) structure on \(\mathcal{O}\).
 \item \(\mathcal{V}=\{\mathcal{V}_{p,q}\}\) is an \(L_{\infty}\) map \((T_{poly},\lambda)\rightarrow C^{\geq 1}(\nu)\).
 \item \(\mathcal{U}=\{\mathcal{U}_{k,n}\}\) is an \(L_{\infty}\) map \((T_{poly},\lambda)\rightarrow C(\mu)\).
 \item \(\mathcal{Z}=\{\mathcal{Z}_{k,m,n}\}\) is a morphism of \(A_{\infty}\) algebras
 \[
  (T_{poly},\nu,\mathcal{V})\rightarrow(C(\mu),\cup^{\mu},br\circ\mathcal{U})
 \]
equipped with homotopy actions by \((T_{poly},\lambda)\).
\end{itemize}
This description is a result of the interpretation of the operad of flag open-closed homotopy algebras. All the component maps have an explicit description as sums over graphs, e.g.
 \[
  \mathcal{V}_{p,q} = \sum_{[\Gamma]\in [dgra^{2p+q-2}_{p,q}]} \int_{\overline{CF}^+_{p,q}(\mathbf{C})}\omega_{\Gamma}\Phi_{\Gamma},
 \]
with \([dgra^{2p+q-2}_{p,q}]\) the set of equivalence classes of graphs under the \(\Sigma_{2p+q-2}\)-action by permutation of edges. We shall use this description to give a more detailed description of the component maps. The main tool is ``Kontsevich's vanishing lemma'':
\begin{lemma}\cite{Kon03}
Let \(X\) be a complex algebraic variety of dimension \(N\geq 1\), and \(Z_1, . . . ,Z_{2N}\) be rational functions on \(X\), not equal identically to zero. Let \(U\) be any Zariski open subset of \(X\) such that functions \(Z_{\alpha}\) are defined and non-vanishing on \(U\), and \(U\) consists of smooth points. Then the integral
 \[
 \int_{U(\mathbf{C})}\wedge_{\alpha=1}^{2N}d(\mathrm{Arg}(Z_{\alpha}))
 \]
is absolutely convergent, and equal to zero.
\end{lemma}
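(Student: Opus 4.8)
The statement to be proved is Kontsevich's vanishing lemma; I would argue exactly as in \cite{Kon03}, and here is the plan. One first puts the situation in standard geometric form: decompose $X$ into irreducible components and resolve singularities, so we may assume $X$ smooth; then choose a smooth projective compactification $\overline{X}\supset X$ such that the boundary together with the zero, polar and indeterminacy loci of $Z_1,\dots,Z_{2N}$ is a simple normal crossings divisor $D$, and replace $U$ by $\overline{X}\setminus D$. This only removes a measure-zero subset of $U$, so it affects neither convergence nor the value of the integral. Let $\pi\colon\widetilde{X}\to\overline{X}$ be the real oriented blow-up along $D$: a compact real-analytic manifold with corners whose interior is identified with $U(\mathbf{C})$.

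For \emph{convergence}, the key observation is that in the local coordinates of $\widetilde{X}$ each function $\mathrm{Arg}(Z_\alpha)$ equals an integral combination of the angular coordinates $\arg z_i$ of the blown-up normal directions plus the argument of a local nonvanishing holomorphic function, hence extends continuously to $\widetilde{X}$; and $d\,\mathrm{Arg}(Z_\alpha)$ extends to a \emph{bounded} smooth $1$-form, because the angular differentials $d(\arg z_i)$ are bounded (unlike $dz_i/z_i$). Therefore $\Omega:=\bigwedge_{\alpha}d\,\mathrm{Arg}(Z_\alpha)$ extends to a smooth $2N$-form $\widetilde{\Omega}$ on the compact $\widetilde{X}$, and $\int_{U(\mathbf{C})}|\Omega|\le\int_{\widetilde{X}}|\widetilde{\Omega}|<\infty$.

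For the \emph{vanishing}, I would first show that $[\Omega]=0$ in $H^{2N}(U(\mathbf{C});\mathbf{R})$. Since $Z_\alpha$ is a regular nonvanishing function, $\mathrm{Re}(d\log Z_\alpha)=d\log|Z_\alpha|$ is exact on $U$, so in $H^1(U;\mathbf{C})$ we have $[d\log Z_\alpha]=i\,[d\,\mathrm{Arg}(Z_\alpha)]$; hence $[\Omega]=(-1)^N\bigwedge_\alpha[d\log Z_\alpha]=(-1)^N Z^*\!\big(\bigwedge_\alpha[dt_\alpha/t_\alpha]\big)$, the pull-back along $Z=(Z_1,\dots,Z_{2N})\colon U\to\mathbf{G}_m^{2N}$ of the fundamental class of the torus. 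That class has pure Hodge type $(2N,2N)$, so its pull-back lies in $F^{2N}H^{2N}(U;\mathbf{C})$; but $H^{2N}(U;\mathbf{C})$ is computed by the logarithmic de Rham complex $\Omega^\bullet_{\overline{X}}(\log D)$, whose terms vanish in degrees $>N=\dim\overline{X}$, so $F^{2N}H^{2N}(U;\mathbf{C})=0$ and $[\Omega]=0$. Consequently $\widetilde{\Omega}=d\rho$ for a smooth $(2N-1)$-form $\rho$ on $\widetilde{X}$, and Stokes' theorem gives $\int_{U(\mathbf{C})}\Omega=\int_{\partial\widetilde{X}}\rho$.

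It remains to show that this boundary integral vanishes, and I expect this to be the main obstacle: the cohomological vanishing only produces a primitive up to its behaviour on $\partial\widetilde{X}$, so one must genuinely control the Stokes terms. I would do this by induction on $N$, the case $N=1$ being a direct residue computation on the curve. Each codimension-one face of $\partial\widetilde{X}$ is, up to corners, an $S^1$-bundle over the real oriented blow-up of a component $D_i$ of $D$, a smooth complex variety of dimension $N-1$; restricting $d\,\mathrm{Arg}(Z_\alpha)$ to the face produces integer multiples of $d\phi_i$ plus pull-backs of the argument-forms $d\,\mathrm{Arg}(Z'_\alpha)$ of the rational functions $Z'_\alpha=Z_\alpha/z_i^{\,\mathrm{ord}_{D_i}(Z_\alpha)}$ restricted to $D_i$, which are again regular and nonvanishing on $U\cap D_i$. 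Integrating a suitably normalized local primitive over the $S^1$-fibres collapses the face contribution to a sum of integrals over $D_i$ of wedges of such argument-forms: the wedges involving at least $2N-1>2\dim D_i$ one-forms vanish for degree reasons, those involving exactly $2\dim D_i$ of them vanish by the inductive hypothesis, and the residual $\mathrm{Arg}$-linear term is again closed on $D_i$ and handled by a further application of the same reduction to the strata $D_i\cap D_j$. Assembling the faces yields $\int_{\partial\widetilde{X}}\rho=0$, hence $\int_{U(\mathbf{C})}\Omega=0$.
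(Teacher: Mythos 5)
The paper does not actually prove this lemma: it is quoted verbatim from Kontsevich \cite{Kon03} and used as a black box, so there is no in-paper argument to compare yours against. Judged on its own, the first half of your proposal is correct and is the standard argument: after passing to a smooth projective model with simple normal crossings boundary $D$ (which changes $U$ only by a measure-zero set), each $\mathrm{Arg}(Z_\alpha)$ is locally $\mathrm{Arg}(u_\alpha)+\sum_i n_{\alpha i}\arg z_i$ with $u_\alpha$ invertible, so $d\,\mathrm{Arg}(Z_\alpha)$ extends to a bounded smooth form on the real oriented blow-up and the integral converges absolutely. Your cohomological step is also correct but over-engineered: $\bigwedge_{\alpha=1}^{2N}d\log Z_\alpha$ is a wedge of $2N>N$ forms of type $(1,0)$ on an $N$-dimensional complex manifold and hence vanishes identically, while each $d\log\lvert Z_\alpha\rvert$ is globally exact on $U$; together these give $[\Omega]=0$ with no appeal to the Hodge filtration on $H^{2N}(U)$.

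The gap is in the final step, which you yourself flag as the main obstacle: everything up to $\int_{U(\mathbf{C})}\Omega=\int_{\partial\widetilde X}\rho$ is soft, and the entire content of the lemma is that this boundary term vanishes. Your inductive scheme over the faces of $\partial\widetilde X$ is asserted rather than carried out. The primitive $\rho$ furnished by $[\widetilde\Omega]=0$ is a global object with no a priori relation to the local angular coordinates, so the ``suitably normalized local primitive'' whose $S^1$-fibre integrals you propose to compute has to be constructed and patched, and the patching data are exactly where nontrivial contributions can hide; moreover the codimension-one faces meet along corners lying over the deeper strata $D_i\cap D_j$, and your sketch does not explain how the corner terms of adjacent faces are matched. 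The ``residual $\mathrm{Arg}$-linear term \dots handled by a further application of the same reduction'' is precisely the kind of term that could fail to cancel, and no cancellation is verified. As written, the proposal reduces the lemma to an unproven family of boundary cancellations rather than proving it; to complete it you should either supply these computations in full or follow the published argument in \cite{Kon03} and its detailed expositions in the deformation-quantization literature.
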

\subsection{The {$L_{\infty}$} structure {$\lambda$}}
We have
 \[
  \lambda_{\ell} = \sum_{[\Gamma]\in[dgra^{2\ell-3}_{\ell}]} \int_{\overline{C}_{\ell}(\mathbf{C})}\omega_{\Gamma}\Phi_{\Gamma}.
 \]
For \(\ell\geq 3\), \(C_{\ell}(\mathbf{C})\cong \mathbf{S}^1\times U\), with \(U=(\mathbf{C}\setminus\{0,1\})^{\ell-2}\setminus diagonals\). This identification can be obtained by using the translation freedom to fix the point labelled by \(1\), say, at the origin of \(\mathbf{C}\) and using the dilation freedom to put the point labelled by \(2\), say, on the unit circle \(\mathbf{S}^1\). Multiplying the remaining points by the inverse of the phase of the point labelled by \(2\) gives a point in \(U\). Using this description we can reduce every integral
 \[
  \int_{\overline{C}_{\ell}(\mathbf{C})}\omega_{\Gamma}
 \]
to an integral over a circle times an integral of the type appearing in Kontsevich's vanishing lemma. Hence all weights vanish for \(\ell\geq 3\). The configuration space \(\overline{C}_2(\mathbf{C})\) is a circle. The set of graphs \(dgra^1_2\) contains two elements; the graph with an edge from \(1\) to \(2\) and the graph with an edge from \(2\) to \(1\). Both graphs have weight \(1\). It follows that \(\lambda_2\) is the Schouten bracket. As all higher homotopies \(\lambda_{\geq 3}\) vanish, this means \(\lambda\) is the usual graded Schouten Lie algebra structure on \(T_{poly}\).

\subsection{The {$A_{\infty}$} structure {$\nu$}}
The \(A_{\infty}\) structure \(\nu\) has components
 \[
  \nu_p = \sum_{[\Gamma]\in [dgra^{p-2}_{0,p}]} \int_{\overline{CF}^+_{0,p}(\mathbf{C})}\omega_{\Gamma}\Phi_{\Gamma}.
 \]
The angle between collinear points is constant, so the differential form associated to a graph containing an edge connecting collinear vertices will be zero; hence no such graphs can contribute. It follows that the only graph which contributes is the graph with two vertices and no edge. The associated differential form is identically equal to one and we evaluate it on the one-point space \(\overline{CF}_{0,2}(\mathbf{C})\). It follows that \(\nu=\nu_2\) is the usual (wedge) product on \(T_{poly}\).

\subsection{The {$A_{\infty}$} structure {$\mu$}}
Arguing as in the preceeding paragraph one deduces that \(\mu=\mu_2\) is the usual product on \(\mathcal{O}\).

\subsection{The {$L_{\infty}$} map {$\mathcal{V}$}}
Since
 \[
  \mathcal{V}_{p,q} = \sum_{[\Gamma]\in [dgra^{2p+q-2}_{p,q}]} \int_{\overline{CF}^+_{p,q}(\mathbf{C})}\omega_{\Gamma}\Phi_{\Gamma}
 \]
and \(\overline{CF}^+_{p,1}(\mathbf{C})\cong\overline{C}_{p+1}(\mathbf{C})\), the argument regarding the \(L_{\infty}\) structure \(\lambda\) can be repeated to conclude that \(\mathcal{V}_{p,1}=0\) for \(p\geq 2\), while
 \[
  \mathcal{V}_{1,1}:T_{poly}\otimes T_{poly}\rightarrow T_{poly}, X\otimes\xi\mapsto [X,\xi]_S.
 \]
In other words, \(\mathcal{V}_{1,1}\) is the adjoint action \(T_{poly}\rightarrow\mathrm{Der}(T_{poly})\) of \(T_{poly}\) on itself by derivations of the wedge product.

Using the translation freedom to put the collinear point labelled by \(1\) at the origin and the collinear point labelled by \(2\) at \(1\) identifies \(CF^+_{p,2}(\mathbf{C})\) with \((\mathbf{C}\setminus\{0,1\})^p\setminus diagonals\), so that one may again use Kontsevish's vanishing lemma and conclude that \(\mathcal{V}_{p,2}=0\) for all \(p\geq 1\).

Reflection of the plane in the line of collinearity induces a diffeomorphism \(f\) of \(\overline{CF}^+_{p,q}(\mathbf{C})\). (Choosing representative configurations with the collinear points on the real axis identifies \(f\) with complex conjugation.) The map \(f\) preserves orientation if \((-1)^p\) is even and reverses it if it is odd. For \(\Gamma\in dgra^{2p+q-2}_{p,q}\), \(f^*\omega_{\Gamma}=(-1)^{2p+q-2}\omega_{\Gamma}=(-1)^q\omega_{\Gamma}\). Thus
 \[
  (-1)^p\int_{\overline{CF}^+_{p,q}(\mathbf{C})}\omega_{\Gamma} = (-1)^q\int_{\overline{CF}^+_{p,q}(\mathbf{C})}\omega_{\Gamma},
 \]
implying the integral is \(0\) whenever \(p\) and \(q\) have different parity, i.e.~whenever \(p+q\) is odd. This means that the first homotopy to \(\mathcal{V}_{1,1}\) is given by \(\mathcal{V}_{1,3}\). The angle between collinear points is constant, so the differential form associated to a graph containing an edge connecting collinear vertices will be zero. The set \(dgra^3_{1,3}\) contains a unique graph without edges connecting collinear vertices, up to direction and ordering of edges, namely the graph with a free vertex of valence three and three collinear vertices of valence one. Hence there are eight (equivalence classes of) graphs (corresponding to the \(2^3\) ways to direct the three edges) contributing to \(\mathcal{V}_{1,3}\). Each of these eight equivalence classes has a representative with the edges ordered so that \(e_i\) connects the free vertex with the collinear vertex labelled by \(i\), \(1\leq i\leq 3\). These representatives all have weight \(1/24\). It follows that
 \[\mathcal{V}_{1,3} =\frac{1}{24}
  m\circ\bigl(\tau_{1,4}\circ\tau_{1,3}\circ\tau_{1,2} + \tau_{1,4}\circ\tau_{1,3}\circ\tau_{2,1}
+ \tau_{1,4}\circ\tau_{3,1}\circ\tau_{1,2} + \tau_{4,1}\circ\tau_{1,3}\circ\tau_{1,2}
 \]
 \[
  + \tau_{4,1}\circ\tau_{3,1}\circ\tau_{1,2} + \tau_{4,1}\circ\tau_{1,3}\circ\tau_{2,1}
+ \tau_{1,4}\circ\tau_{3,1}\circ\tau_{2,1} + \tau_{4,1}\circ\tau_{3,1}\circ\tau_{2,1} \bigr)
 \]
as a map \(T_{poly}^{\otimes 1+3}\rightarrow T_{poly}\). (The first of the four copies of \(T_{poly}\) acts on the last three.)

\subsection{The {$L_{\infty}$} map {$\mathcal{U}$}}
The map \(\mathcal{U}\) is, by construction, Kontsevich's Formality Map. Recall that it's first Taylor component \(\mathcal{U}_1=\sum_{n\geq 0}\mathcal{U}_{1,n}\) is the Hochschild-Kostant-Rosenberg quasi-isomorphism.

\subsection{The map {$\mathcal{Z}$} of {$NCG_{\infty}$} algebras}
Since \(\overline{CF}^+_{0,1,n}(\mathbf{H})\) is isomorphic to \(\overline{CF}^+_{1,0,n}(\mathbf{H})\) and \(dgra^n_{0,1,n}\) is isomorphic to \(dgra^n_{1,0,n}\), for all \(n\), the maps \(\mathcal{Z}_{0,1,n}\) coincide with the maps \(\mathcal{U}_{1,n}\). Hence the first Taylor component of \(\mathcal{Z}\),
 \[
  \sum_{n\geq 0}\mathcal{Z}_{0,1,n}:T_{poly}\rightarrow C(\mu),
 \]
is the Hochschild-Kostant-Rosenberg (HKR) quasi-isomorphism. The higher components of Kontsevich's Formality Map \(\mathcal{U}\)  are homotopies measuring the failure of the HKR map to respect the Lie brackets. In the same way, the higher components of \(\mathcal{Z}\) are homotopies that keep track of the failure of the HKR map to respect the associative products and the respective actions of \(T_{poly}\) by homotopy derivations of said associative products. Since the first component is the HKR morphism, we get the following theorem:
\begin{theorem}[Main Theorem]\label{maintheorem}
The algebras \((T_{poly},\wedge,\mathcal{V})\) and \((C(\mathcal{O},\mathcal{O}),d_H+\cup,br\circ\mathcal{U})\) are quasi-isomorphic as \(A_{\infty}\) algebras with \(L_{\infty}\) actions by \((T_{poly},[\,,\,]_S)\). The map \(Z=\{Z_{k,m}=\sum_{n\geq 0}\mathcal{Z}_{k,m,n}\}_{k\geq 0,m\geq 1}\) is an explicit such quasi-isomorphism.
\end{theorem}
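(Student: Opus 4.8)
The plan is to deduce the statement from the operadic formalism of Sections~1--3 together with the explicit identifications of the structure maps made in the subsections on \(\lambda,\nu,\mu,\mathcal{V},\mathcal{U},\mathcal{Z}\), so the proof is in large part a matter of translation. The first step is to invoke the Lemma characterising representations of the operad of flag OCHAs: the representation \(\Phi\circ\omega^{*}:\mathcal{K}(\overline{CF}(\mathbf{H}))\to\mathcal{E}nd(T_{poly},T_{poly},\mathcal{O})\) produced by the Claims of Section~2 contains, as its third ``block'', a morphism \(\mathcal{Z}:(T_{poly},\nu,\mathcal{V})\to(C(\mu),\cup^{\mu},br\circ\mathcal{U})\) of \(A_{\infty}\) algebras equipped with homotopy \(L_{\infty}\) actions by \((T_{poly},\lambda)\). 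The Main Theorem is exactly this block, so everything reduces to naming its ingredients and checking that \(\mathcal{Z}\), assembled into \(Z\), is a quasi-isomorphism.

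The second step is to substitute the computations of the earlier subsections. There \(\lambda\) is identified with the strict Schouten bracket \([\,,\,]_{S}\), \(\nu\) with the strict wedge product on \(T_{poly}\), and \(\mu\) with the strict commutative product on \(\mathcal{O}\); consequently \((C(\mu),\cup^{\mu})\) is the usual Hochschild differential graded algebra, which in our notation is \((C(\mathcal{O},\mathcal{O}),d_{H}+\cup)\). The one small verification hidden here is that \(br\) applied to the \emph{strict} product \(\mu=m_{2}\) returns precisely the cup product together with the Hochschild differential, which is a direct unwinding of the brace-operation formulas recalled before Kontsevich's vanishing lemma. The \(L_{\infty}\) action on the source is \(\mathcal{V}\), whose linear component \(\mathcal{V}_{1,1}\) is the adjoint action of \(T_{poly}\) on itself by wedge-derivations; the \(L_{\infty}\) action on the target is \(br\circ\mathcal{U}\), with \(\mathcal{U}\) Kontsevich's Formality morphism. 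Both are actions of the single Lie algebra \((T_{poly},[\,,\,]_{S})\), which is what the statement asserts.

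The third step is to check that \(Z=\{Z_{k,m}=\sum_{n\ge0}\mathcal{Z}_{k,m,n}\}_{k\ge0,\,m\ge1}\) is a quasi-isomorphism. A morphism of \(A_{\infty}\) algebras respecting homotopy \(L\)-actions, equivalently a \(\mathcal{M}or_{*}(\mathcal{NCG})_{\infty}\)-morphism, is a quasi-isomorphism as soon as its first Taylor component is a quasi-isomorphism of the underlying complexes, for then it admits a homotopy inverse carrying the same kind of structure by the standard obstruction-theoretic inversion of \(\infty\)-morphisms. So it suffices to inspect \(Z_{0,1}=\sum_{n\ge0}\mathcal{Z}_{0,1,n}\). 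By the isomorphisms \(\overline{CF}^{+}_{0,1,n}(\mathbf{H})\cong\overline{CF}^{+}_{1,0,n}(\mathbf{H})\) and \(dgra^{n}_{0,1,n}\cong dgra^{n}_{1,0,n}\) noted in the subsection on \(\mathcal{Z}\), this map agrees termwise with \(\mathcal{U}_{1}=\sum_{n\ge0}\mathcal{U}_{1,n}\), the Hochschild--Kostant--Rosenberg map, which is a quasi-isomorphism. This completes the verification.

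I do not expect any of these steps to be an obstacle in the usual sense; the real work lies elsewhere, in the Claims of Section~2 that \(\omega\) and \(\Phi\) are morphisms of (co)operads, and in the orientation and sign conventions fixed for the spaces \(\overline{CF}_{k,m,n}(\mathbf{H})\). What one must be careful about is precisely that those conventions are the ones for which the quadratic relations forced by the codimension-one boundary strata become the \(A_{\infty}\)-morphism-with-homotopy-\(L\)-action relations in their conventional normalisation; granting that, the Main Theorem follows formally from the flag-OCHA Lemma and the computations cited above.
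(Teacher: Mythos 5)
Your proposal is correct and follows essentially the same route as the paper: the theorem is read off from the flag-OCHA representation, the structures are identified via the subsection computations, and the quasi-isomorphism property is reduced to the observation that $\mathcal{Z}_{0,1,n}=\mathcal{U}_{1,n}$ (via $\overline{CF}^{+}_{0,1,n}(\mathbf{H})\cong\overline{CF}^{+}_{1,0,n}(\mathbf{H})$ and $dgra^{n}_{0,1,n}\cong dgra^{n}_{1,0,n}$), so that the first Taylor component is the HKR quasi-isomorphism. If anything, your write-up is more explicit than the paper's own one-line justification.
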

This statement implies the following \(A_{\infty}\) formality theorem:
\begin{corollary}
The algebras \((T_{poly},\wedge)\) and \((C(\mathcal{O},\mathcal{O}),d_H+\cup)\) are quasi-isomorphic \(A_{\infty}\) algebras. The map \(A=\{A_m:=\sum_{n\geq 0}\mathcal{Z}_{0,m,n}\}_{m\geq 1}\) is an explicit such quasi-isomorphism.
\end{corollary}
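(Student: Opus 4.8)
The plan is to obtain the Corollary from Theorem~\ref{maintheorem} by forgetting the $L_\infty$-actions and retaining only the underlying morphism of $A_\infty$ algebras. The key observation is that this forgetting is functorial: an $A_\infty$ algebra equipped with an $L_\infty$-action has an underlying $A_\infty$ algebra, a morphism of such structures has an underlying morphism of $A_\infty$ algebras, and, on the level of the chain description of $\Phi\circ\omega^*$, this amounts to discarding all fundamental chains carrying an input in the Lie-algebra colour, i.e.~all chains with a free vertex.

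First I would check that passing to $k=0$ is compatible with the operad structure, so that it really does describe a forgetful functor: the chains $[CF^+_{0,q}(\mathbf{C})]$, $[CF^+_{0,0,n}(\mathbf{H})]$ and $[CF^+_{0,m,n}(\mathbf{H})]$ span a sub-dg-operad of $\mathcal{K}(\overline{CF}(\mathbf{H}))$. Indeed, setting $k=0$ (equivalently $p=0$) in the codimension-one boundary formulas of \S1.4 removes every stratum containing a factor of the form $C_I(\mathbf{C})$ or a free vertex, so the boundary of such a chain again involves only such chains; and no operadic composition can produce a free vertex where there was none. Restricting $\Phi\circ\omega^*$ to this suboperad and rewriting it through the same hom-adjunction as in the structure lemma for flag OCHAs --- now with all Lie-colour inputs absent --- recovers: the $A_\infty$ algebra $(T_{poly},\nu)$, which by \S3.2 is $(T_{poly},\wedge)$; the $A_\infty$ algebra $(C(\mu),\cup^\mu)$, which, since $\mu=\mu_2$ is the usual product on $\mathcal{O}$ by \S3.3, is precisely $(C(\mathcal{O},\mathcal{O}),d_H+\cup)$ with $d_H=[\mu_2,\,]_G$ and $\cup=br(\mu_2)$ the classical Hochschild differential and cup product; and an $A_\infty$-morphism between them whose component $T_{poly}^{\otimes m}\to C(\mathcal{O},\mathcal{O})$ is $A_m=\sum_{n\geq 0}\mathcal{Z}_{0,m,n}=Z_{0,m}$. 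Thus $A=\{A_m\}_{m\geq 1}$ is exactly the $k=0$ part of the map $Z$ of Theorem~\ref{maintheorem}, and in particular it is a morphism of $A_\infty$ algebras.

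It then remains only to see that $A$ is a quasi-isomorphism, i.e.~that its linear part is a quasi-isomorphism of the underlying complexes $(T_{poly},\nu_1)$ --- which has zero differential since $\nu=\nu_2$ --- and $(C(\mathcal{O},\mathcal{O}),d_H)$. By the identifications $\overline{CF}^+_{0,1,n}(\mathbf{H})\cong\overline{CF}^+_{1,0,n}(\mathbf{H})$ and $dgra^n_{0,1,n}\cong dgra^n_{1,0,n}$ recorded in \S3.6, the linear part $A_1=\sum_{n\geq 0}\mathcal{Z}_{0,1,n}$ coincides with $\mathcal{U}_1=\sum_{n\geq 0}\mathcal{U}_{1,n}$, the first Taylor component of Kontsevich's formality map, which is the Hochschild-Kostant-Rosenberg quasi-isomorphism. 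Since a morphism of $A_\infty$ algebras whose linear part is a quasi-isomorphism of complexes is, by definition, an $A_\infty$ quasi-isomorphism, the Corollary follows. The step requiring the most care is the bookkeeping in the second paragraph; but this is routine, being immediate from the boundary formulas of \S1.4 (every free-vertex stratum disappears when $k=0$) together with the structure lemma for flag OCHAs, and all the genuine content is already carried by Theorem~\ref{maintheorem} and by the classical fact, invoked already in \S3.5, that the HKR map is a quasi-isomorphism.
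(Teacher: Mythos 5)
Your proposal is correct and follows the same route the paper intends: the corollary is obtained from Theorem~\ref{maintheorem} by forgetting the \(L_{\infty}\)-actions, i.e.~restricting the representation to the \(k=0\) (no free vertices) suboperad, and observing that the linear component \(\sum_n\mathcal{Z}_{0,1,n}\) is the HKR quasi-isomorphism. The paper leaves all of this implicit, so your verification that the \(k=0\) chains are closed under the boundary maps and your identification of the resulting structures is a faithful, slightly more detailed version of the same argument.
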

This result has already been demonstrated, but in a different way, by Shoikhet; see \cite{Sh}.

\subsection{Induced {$A_{\infty}$} maps}
An \(NCG_{\infty}\) algebra consists in an \(L_{\infty}\) algebra \((L,\lambda)\), an \(A_{\infty}\) algebra \((A,\nu)\) and an \(L_{\infty}\) morphism \(\mathcal{V}:L\rightarrow C^{\geq 1}(\nu)\). Let \(\hbar\) be a formal parameter. The map \(\mathcal{V}\) induces a map on the sets of Maurer-Cartan elements,
 \[
  \mathrm{MC}(L[[\hbar]])\rightarrow\mathrm{MC}(C(\nu)[[\hbar]]), \pi\mapsto \sum_{p\geq 1}\frac{1}{p!}\mathcal{V}_{p,q}((\hbar \pi)^{\otimes p},\,).
 \]
This gives us, for each Maurer-Cartan element \(\pi\) of \(L\), an \(A_{\infty}\) structure
 \[
  \nu^{\mathcal{V}(\pi)}_q:=\nu_q+\sum_{p\geq 1}\frac{1}{p!}\mathcal{V}_{p,q}((\hbar \pi)^{\otimes p},\,), \;q\geq 1,
 \]
on \(A[[\hbar]]\).

If \(\mathcal{Z}:(L,A,\lambda,\mathcal{V},\nu)\rightarrow (L,B,\lambda,\mathcal{U},\mu)\) is a morphism of \(NCG_{\infty}\) algebras (the same \(L_{\infty}\) algebra acting on both and we assume the \(NCG_{\infty}\) algebra morphism is the identity on the Lie-color), then, for any Maurer-Cartan element \(\pi\) of \(L[[\hbar]]\), we get an induced map of \(A_{\infty}\) algebras
 \[
  \mathcal{Z}^{\pi}: (A[[\hbar]],\nu^{\mathcal{V}(\pi)})\rightarrow(B[[\hbar]],\mu^{\mathcal{U}(\pi)})
 \]
by \(\mathcal{Z}^{\pi}_m:=\mathcal{Z}_{0,m}+\sum_{k\geq 0}\frac{1}{k!}\mathcal{Z}_{k,m}((\hbar\pi)^{\otimes k},\,)\). If \(\mathcal{Z}\) is a quasi-isomorphism, then \(\mathcal{Z}^{\pi}\) is as well.

Applying this general construction to our representation \(\Phi\circ\omega^*\) produces, for any Maurer-Cartan element \(\pi\in T_{poly}\) (i.e.~a Poisson bivector),
\begin{itemize}
 \item an \(A_{\infty}\) structure \(\nu^{\mathcal{V}(\pi)}\) on \(T_{poly}[[\hbar]]\) with \(\nu^{\mathcal{V}(\pi)}_1+\nu^{\mathcal{V}(\pi)}_2 = \hbar[\pi,\,]_S+\wedge\),
 \item the \(A_{\infty}\) cup product on the Hochschild cochains of \(\mathcal{O}[[\hbar]]\) corresponding to the Kontsevich star product \(\mu^{\mathcal{U}(\pi)}\) on \(\mathcal{O}[[\hbar]]\) defined by \(\pi\),
 \item and an \(A_{\infty}\) quasi-isomorphism \(\mathcal{Z}^{\pi}:(T_{poly}[[\hbar]],\nu^{\mathcal{V}(\pi)})\rightarrow C(\mu^{\mathcal{U}(\pi)})[[\hbar]]\).
\end{itemize}
We record this fact as a corollary.
\begin{corollary}
Let \(\pi\in T_{poly}\) be a Poisson structure. Then the \(A_{\infty}\) algebra \((T_{poly}[[\hbar]],\nu^{\mathcal{V}(\pi)})\) is quasi-isomorphic as an \(A_{\infty}\) algebra to the algebra of Hochschild cochains on \(\mathcal{O}[[\hbar]]\) equipped with the cup product corresponding to the Kontsevich star product defined by \(\pi\). The map \(\mathcal{Z}^{\pi}\) is an explicit such quasi-isomorphism.
\end{corollary}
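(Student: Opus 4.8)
The plan is to read the statement off from the general twisting construction of the preceding subsection, specialized to \(L=A=T_{poly}\), \(B=C(\mathcal{O},\mathcal{O})\), with \(\mathcal{Z}\) the \(NCG_\infty\)-morphism furnished by the Main Theorem and with \(\pi\) a Poisson bivector — which, since \(\lambda=\lambda_2=[\,,\,]_S\) has no higher components, is precisely a Maurer--Cartan element of the \(L_\infty\)-algebra \((T_{poly},\lambda)\). That construction already supplies an \(A_\infty\)-morphism \(\mathcal{Z}^\pi\colon(T_{poly}[[\hbar]],\nu^{\mathcal{V}(\pi)})\to(C(\mathcal{O},\mathcal{O})[[\hbar]],\mu^{\mathcal{U}(\pi)})\); the remaining tasks are to identify the source and the target explicitly and then to prove that \(\mathcal{Z}^\pi\) is a quasi-isomorphism.

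For the source, the computations of \(\mathcal{V}\) give \(\mathcal{V}_{1,1}(X\otimes\xi)=[X,\xi]_S\), \(\mathcal{V}_{p,1}=0\) for \(p\ge 2\) and \(\mathcal{V}_{p,2}=0\) for all \(p\ge 1\), and \(\mathcal{V}_{p,q}\) is defined only for \(p\ge 1\); hence every \(\mathcal{V}\)-correction to \(\nu\) carries at least one factor \(\hbar\pi\), and collecting the arity-one and arity-two contributions yields \(\nu^{\mathcal{V}(\pi)}_1=\hbar[\pi,\,]_S\) and \(\nu^{\mathcal{V}(\pi)}_2=\wedge\). For the target, \(\mathcal{U}\) is by construction Kontsevich's \(L_\infty\)-formality map, so \(\mathcal{U}(\hbar\pi):=\sum_{k\ge 1}\tfrac{1}{k!}\mathcal{U}_k((\hbar\pi)^{\otimes k})\) is the Maurer--Cartan element of \(C(\mu)[[\hbar]]\) for which \(\mu^{\mathcal{U}(\pi)}:=\mu+\mathcal{U}(\hbar\pi)\) is, by definition, Kontsevich's star product on \(\mathcal{O}[[\hbar]]\) determined by \(\pi\). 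Because \(br\) is a strict morphism of graded Lie algebras, \(br\circ\mathcal{U}\) is again an \(L_\infty\)-morphism, with Taylor coefficients \(br\circ\mathcal{U}_k\), so the twisted \(A_\infty\)-structure the construction places on \(C(\mathcal{O},\mathcal{O})[[\hbar]]\) is \(\cup^\mu+(br\circ\mathcal{U})(\hbar\pi)=br(\mu)+br(\mathcal{U}(\hbar\pi))=br\bigl(\mu+\mathcal{U}(\hbar\pi)\bigr)=br(\mu^{\mathcal{U}(\pi)})=\cup^{\mu^{\mathcal{U}(\pi)}}\), i.e.\ precisely the cup-product \(A_\infty\)-structure on the Hochschild cochains of \(\bigl(\mathcal{O}[[\hbar]],\mu^{\mathcal{U}(\pi)}\bigr)\).

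It remains to prove that \(\mathcal{Z}^\pi\) is a quasi-isomorphism, the only non-formal point. Since \(\mathcal{Z}^\pi_m=\mathcal{Z}_{0,m}+\sum_{k\ge 1}\tfrac{1}{k!}\mathcal{Z}_{k,m}((\hbar\pi)^{\otimes k},\,)\), reduction modulo \(\hbar\) carries \(\mathcal{Z}^\pi\) to the \(A_\infty\)-morphism \(A=\{A_m=\sum_{n\ge 0}\mathcal{Z}_{0,m,n}\}\) of the Corollary to the Main Theorem, whose linear component \(A_1=\sum_{n\ge 0}\mathcal{Z}_{0,1,n}\) is the Hochschild--Kostant--Rosenberg quasi-isomorphism. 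Consequently \(\mathcal{Z}^\pi_1\) is an \(\mathbf{R}[[\hbar]]\)-linear chain map between the \(\hbar\)-torsion-free, \(\hbar\)-adically complete complexes \((T_{poly}[[\hbar]],\hbar[\pi,\,]_S)\) and \((C(\mathcal{O},\mathcal{O})[[\hbar]],[\mu^{\mathcal{U}(\pi)},\,]_G)\) whose reduction modulo \(\hbar\) is a quasi-isomorphism, namely the HKR map from \((T_{poly},0)\) to the Hochschild complex of the commutative algebra \(\mathcal{O}\). By the standard lemma that such a chain map is automatically a quasi-isomorphism over \(\mathbf{R}[[\hbar]]\) — proved by noting that its mapping cone is complete and separated for the \(\hbar\)-adic filtration and has acyclic associated graded, or equivalently by a comparison of the spectral sequences of that filtration — we conclude that \(\mathcal{Z}^\pi_1\) is a quasi-isomorphism, hence that \(\mathcal{Z}^\pi\) is an \(A_\infty\)-quasi-isomorphism. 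I expect the main obstacle to be not any single deep step but keeping the twisting bookkeeping straight: one must make sure the construction's output target genuinely coincides with \(\cup^{\mu^{\mathcal{U}(\pi)}}\) and that the two twisted differentials are \(\hbar\)-adically small perturbations of the kind the completeness argument needs; once that is secured the proof is routine.
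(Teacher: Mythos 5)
Your proposal is correct and follows the same route as the paper: the corollary is obtained by specializing the general twisting construction of the preceding subsection to the representation $\Phi\circ\omega^*$ and invoking the Main Theorem, with the paper simply asserting that twisting a quasi-isomorphism by a Maurer--Cartan element yields a quasi-isomorphism. The only difference is that you spell out the standard $\hbar$-adic filtration argument for that last assertion, which the paper leaves implicit.
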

\subsection{Homological properties of the exotic {$NCG_{\infty}$} algebra structure {$\mathcal{V}$}}
Let \(\mathcal{NCG}\) be the two-colored operad of noncommutative Gerstenhaber algebras and let \(f:\mathcal{NCG}\rightarrow\mathfrak{G}_{{\scriptscriptstyle \overline{CF}(\mathbf{C})}}\) be the map which sends the bracket to the (sum of) graph(s) \(e_{12}+e_{21}\in\mathfrak{G}_{{\scriptscriptstyle\overline{CF}(\mathbf{C})}}(2)=\mathfrak{G}_{{\scriptscriptstyle \overline{C}(\mathbf{C})}}(2)\), for \(e_{12}\) (\(e_{21}\)) the graph with vertices \(\{1,2\}\) and a single edge from \(1\) to \(2\) (from \(2\) to \(1\)), sends the product to the graph in \(\mathfrak{G}_{{\scriptscriptstyle \overline{CF}(\mathbf{C})}}(0,2)\) which has two vertices and no edge, and sends the action to the graph in \(\mathfrak{G}_{{\scriptscriptstyle \overline{CF}(\mathbf{C})}}(1,1)\) which is \(e_{12}+e_{21}\) with the vertices in different colors. The composition \(\Phi\circ f:\mathcal{NCG}\rightarrow\mathcal{E}nd(T_{poly},T_{poly})\) is the usual structure of NCGA on polyvector fields in terms of the wedge product and the Schouten bracket. The deformation complex of \(f\) is the mapping cone
\[
 \mathscr{C}:=\mathrm{Cone}(\mathrm{Def}(\mathcal{L}ie^1_{\infty}\rightarrow \mathfrak{G}_{{\scriptscriptstyle \overline{C}(\mathbf{C})}})[-1]\rightarrow
\mathrm{Def}(\mathcal{A}ss_{\infty}\rightarrow\textstyle{\int}\mathfrak{G}_{{\scriptscriptstyle \overline{CF}(\mathbf{C})}})).
\]
See the appendix for notation and further details. The complex \(\mathrm{Def}(\mathcal{L}ie^1_{\infty}\rightarrow \mathfrak{G}_{{\scriptscriptstyle \overline{C}(\mathbf{C})}})\) is a directed version of Kontsevich's graph complex, \(GC\), and quasi-isomorphic to it, as shown in \cite{W}. The operad \(\int\mathfrak{G}_{{\scriptscriptstyle \overline{CF}(\mathbf{C})}}\) is a directed version of the operad \(\mathcal{G}raphs\) used by Kontsevich in his proof in \cite{Kon99} of the formality of the little disks operad, and it is quasi-isomorphic to it\cite{W}. Thomas Willwacher has proved the following:
\begin{theorem}\cite{W}
\begin{itemize} 
 \item \(H^0(GC)\cong\mathfrak{grt}\) as a graded Lie algebra.
 \item \(H^1(\mathrm{Def}(\mathcal{A}ss_{\infty}\rightarrow\mathcal{G}raphs))\cong\mathfrak{grt}\oplus\mathbf{R}[-1]\) as a vector space, where \(\mathbf{R}[-1]\) is spanned by the class of the sum of graphs contributing to \(\mathcal{V}_{1,3}\).
 \item The map \(GC[-1]\rightarrow\mathrm{Def}(\mathcal{A}ss_{\infty}\rightarrow\mathcal{G}raphs)\) is injective on cohomology.
\end{itemize}
\end{theorem}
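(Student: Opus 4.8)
The result is due to Willwacher \cite{W}, so I only sketch the strategy. The plan is to reduce all three assertions to deformation theory of Kontsevich's graph operad \(\mathcal{G}raphs\) --- which models the chains on the little \(2\)-disks operad --- and of the graph complex \(GC\), which (as recalled above) is quasi-isomorphic to \(\mathrm{Def}(\mathcal{L}ie^1_{\infty}\rightarrow\mathfrak{G}_{{\scriptscriptstyle \overline{C}(\mathbf{C})}})\). The first bullet carries essentially all the difficulty; the other two I would then obtain from it by homological algebra plus one explicit low-degree graph computation, in which the \(\mathcal{V}_{1,3}\) class appears.

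\emph{The first bullet.} First I would read \(H^0(GC)\) as the Lie algebra of homotopy classes of symmetries of the Maurer--Cartan element \(\mathcal{L}ie^1_{\infty}\rightarrow\mathfrak{G}_{{\scriptscriptstyle \overline{C}(\mathbf{C})}}\); equivalently, it acts by exponentiation on the moduli set of homotopy self-equivalences of \(\mathcal{G}raphs\) inducing the identity on its cohomology operad (the Gerstenhaber operad). By the theory of Drinfeld associators and Tamarkin's formality of the little \(2\)-disks operad, the homotopy automorphisms of its rationalization form the prounipotent Grothendieck--Teichm\"uller group, so this moduli set is a torsor under that group; differentiating the torsor structure yields the isomorphism of graded Lie algebras \(H^0(GC)\cong\mathfrak{grt}\). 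The self-contained route of \cite{W} instead constructs an explicit Lie homomorphism \(\mathfrak{grt}\rightarrow H^0(GC)\) from a presentation of \(\mathfrak{grt}\) and then proves it bijective by filtering \(GC\) by the first Betti number (loop order) of graphs: the early pages of the resulting spectral sequence are expressible through Chevalley--Eilenberg-type complexes with known cohomology, and a convergence and degeneration analysis forces an isomorphism in degree \(0\). Essentially all the work lives here.

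\emph{The other two bullets.} I would realize \(GC[-1]\rightarrow\mathrm{Def}(\mathcal{A}ss_{\infty}\rightarrow\mathcal{G}raphs)\) --- it sends an operadic derivation of \(\mathcal{G}raphs\), i.e.~a graph in \(GC\), to the deformation of the chosen associative structure obtained by composition --- as the inclusion of a subcomplex, and prove it injective on cohomology (the third bullet) by exhibiting a homotopy retraction, or more cheaply by a direct weight/valence argument showing that a nonzero cocycle built from \(GC\)-graphs (which carry none of the external ``product'' vertices) cannot become exact under the differential of \(\mathrm{Def}\). The quotient \(\mathscr{Q}\) is a directed, non-symmetric graph complex governing deformations of the associative product alone inside \(\mathcal{G}raphs\); I would compute \(H^1(\mathscr{Q})\) directly from the combinatorics of the graph differential, using the parity/reflection symmetry and Kontsevich-vanishing arguments already exploited for \(\mathcal{V}\) in this paper: in the relevant degree the unique surviving class is the symmetrized sum over the eight edge-orientations of the tripod graph --- one free vertex of valence three joined to three collinear vertices --- which is precisely a representative of \(\mathcal{V}_{1,3}\), so \(H^1(\mathscr{Q})\cong\mathbf{R}\). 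Since that tripod cocycle lifts to an honest cocycle in \(\mathrm{Def}(\mathcal{A}ss_{\infty}\rightarrow\mathcal{G}raphs)\) --- namely the exotic \(\mathcal{V}_{1,3}\) deformation itself --- the long exact sequence of the pair \((GC[-1],\mathrm{Def}(\mathcal{A}ss_{\infty}\rightarrow\mathcal{G}raphs))\), combined with injectivity and with \(H^1(GC[-1])\cong H^0(GC)\cong\mathfrak{grt}\) from the first bullet, collapses to \(H^1(\mathrm{Def}(\mathcal{A}ss_{\infty}\rightarrow\mathcal{G}raphs))\cong\mathfrak{grt}\oplus\mathbf{R}[-1]\), with the extra summand spanned by the \(\mathcal{V}_{1,3}\) class.

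The hard part is the first bullet. Identifying \(H^0(GC)\) with \(\mathfrak{grt}\) needs either the full weight of Drinfeld associators and Tamarkin formality together with a careful comparison of two torsor actions, or Willwacher's direct but intricate spectral-sequence computation of graph cohomology. Granting that, the other two bullets are comparatively soft homological algebra, the only real computation being the low-degree cohomology of the directed graph complex \(\mathscr{Q}\) --- which is exactly where the exotic \(\mathcal{V}_{1,3}\) class materializes, and which is the whole point of the present discussion.
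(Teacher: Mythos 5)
You should first be aware that the paper itself offers no proof of this statement: it is imported verbatim from Willwacher \cite{W} as a black box, and the only thing the paper does with it is feed it into the long exact sequence of the mapping cone \(\mathscr{C}\). So there is no ``paper's own proof'' to compare against, and your proposal has to be judged on its own terms as a reconstruction of \cite{W}. As a roadmap it is broadly faithful: the identification \(H^0(GC)\cong\mathfrak{grt}\) does rest on the interplay between graph cocycles, homotopy automorphisms of \(\mathcal{G}raphs\) (equivalently of the rationalized little disks operad), and the Drinfeld/Tamarkin theory of associators, with explicit graph-complex computations supplying injectivity; and the second bullet is indeed extracted from the first and third via the long exact sequence for \(GC[-1]\rightarrow\mathrm{Def}(\mathcal{A}ss_{\infty}\rightarrow\mathcal{G}raphs)\), the extra \(\mathbf{R}[-1]\) being the tripod class. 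Your observation that injectivity on cohomology kills the connecting maps and splits \(H^1\) as \(\mathfrak{grt}\oplus\mathbf{R}[-1]\) is the right piece of homological algebra.

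That said, as a proof the proposal has gaps precisely where you flag difficulty, and these cannot be waved away: both \(H^0(GC)\cong\mathfrak{grt}\) and the injectivity of \(GC[-1]\rightarrow\mathrm{Def}(\mathcal{A}ss_{\infty}\rightarrow\mathcal{G}raphs)\) on cohomology are substantial theorems in \cite{W}, not ``comparatively soft'' consequences of a filtration or a ``direct weight/valence argument,'' and your sketch does not supply the arguments (nor does it need to, given that the paper also defers to \cite{W}). Two more specific cautions. First, \(GC[-1]\rightarrow\mathrm{Def}(\mathcal{A}ss_{\infty}\rightarrow\mathcal{G}raphs)\) is the map \(\gamma\mapsto\gamma\circ(f_{p,q})\) of the appendix, i.e.\ post-composition with the Maurer--Cartan element, and is not obviously the inclusion of a subcomplex; if you want to run a long-exact-sequence-of-a-pair argument you should either verify injectivity at the chain level in a suitable model or work with the mapping cone instead (which is what the paper does with \(\mathscr{C}\)). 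Second, you conflate two different mechanisms when you propose to compute \(H^1\) of the quotient ``using the parity/reflection symmetry and Kontsevich-vanishing arguments already exploited for \(\mathcal{V}\)'': Kontsevich's vanishing lemma and the reflection argument govern which configuration-space \emph{weights} are nonzero (hence which graphs appear in the explicit representative \(\mathcal{V}_{1,3}\)), whereas the assertion that the tripod class spans a one-dimensional piece of \(H^1(\mathrm{Def}(\mathcal{A}ss_{\infty}\rightarrow\mathcal{G}raphs))\) is a statement about the cohomology of the graph differential, proved by purely combinatorial means in \cite{W}. The two computations are logically independent, and only the first is carried out in the present paper.
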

This theorem, together with the long exact sequence for our mapping cone, implies that \(H^{d+1}(\mathscr{C})\cong H^{d+1}(\mathrm{Def}(\mathcal{A}ss_{\infty}\rightarrow\mathcal{G}raphs))/H^d(GC)\). In particular, \(H^1(\mathscr{C})\) is one-dimensional, spanned by the sum of graphs entering \(\mathcal{V}_{1,3}\).

Using the representation \(\Phi\) we can push this statement to a universal (or, rather, generic) statement about structures on polyvector fields.
\begin{corollary}
The exotic \(NCG_{\infty}\) algebra structure \(\mathcal{V}\) on polyvector fields is generically not homotopic to the usual such structure. Moreover, it represents the unique infinitesimal deformation of the usual structure.
\end{corollary}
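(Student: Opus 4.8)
The plan is to deduce the statement from the computation of $H^{\bullet}(\mathscr{C})$ carried out above and then to transport it along the representation $\Phi$ to a statement about genuine polyvector fields.

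First I would pin down the role of $\mathscr{C}$. Because $\mathcal{NCG}$ and $\mathcal{M}or_*(\mathcal{NCG})$ are Koszul (appendix), the mapping cone $\mathscr{C}$ is, up to quasi-isomorphism, the full deformation complex of the NCGA structure $f$, i.e.~the dg Lie algebra controlling $NCG_{\infty}$-deformations of the universal graph-level NCGA structure on $(T_{poly},T_{poly})$ built from $\wedge$ and $[\,,\,]_S$. By standard deformation theory, $H^1(\mathscr{C})$ is then precisely the set of infinitesimal $NCG_{\infty}$-deformations of that structure modulo gauge equivalence (homotopy). Combining the long exact sequence of the cone with Willwacher's theorem, and using that the $L_{\infty}$ part $\lambda$ and the $A_{\infty}$ parts $\nu$, $\mu$ are rigid in the graph model, one obtains --- as recorded above --- that $H^1(\mathscr{C})$ is one-dimensional, spanned by the class of the sum of graphs entering $\mathcal{V}_{1,3}$.

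Next I would check that $\mathcal{V}=\{\mathcal{V}_{p,q}\}$ is itself such an infinitesimal deformation and realizes the generator. By the computations of the preceding section, $\mathcal{V}_{1,1}=[\,,\,]_S$ is the strict action, $\mathcal{V}_{p,1}=0$ for $p\geq 2$, and $\mathcal{V}_{p,2}=0$ for all $p$; hence, regarding the difference between the $NCG_{\infty}$ structure $(\lambda,\nu,\mathcal{V})$ and the strict structure $(\lambda,\nu,[\,,\,]_S)$ as a Maurer--Cartan element of $\hbar\,\mathscr{C}[[\hbar]]$, its leading term is $\hbar\,\mathcal{V}_{1,3}$, which is a degree-one cocycle of $\mathscr{C}$ representing the generator. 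Thus $(\lambda,\nu,\mathcal{V})$ is a nontrivial deformation, and since $H^1(\mathscr{C})$ is one-dimensional it is, up to rescaling, the unique infinitesimal deformation of the strict structure. This gives both assertions at the universal level.

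Finally, to descend to polyvector fields I would use that the representation $\Phi:\mathfrak{G}_{{\scriptscriptstyle \overline{CF}(\mathbf{C})}}\to\mathcal{E}nd(T_{poly},T_{poly})$ is injective on the relevant graph spaces once $\dim V$ is large enough: each equivalence class of graphs yields a linearly independent polydifferential operator on $T_{poly}(V)$ for $V$ of sufficiently high dimension, by the standard device of evaluating on generic linear forms and monomials. Hence $\Phi$ induces an injection on the relevant $H^1$, the nonzero universal class persists as a nonzero class in the deformation complex of the honest NCGA structure on $T_{poly}(V)$, and a homotopy between $\mathcal{V}$ and the strict action on $T_{poly}(V)$ would contradict it; the uniqueness statement transports along $\Phi$ in the same way, since an injection on $H^1$ cannot merge or split deformation classes. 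This proves the corollary for generic $V$. I expect this last step to be the main obstacle: one has to make precise exactly which graph cohomology classes are detected by $\Phi$ on $T_{poly}(V)$, and for which $V$, since a priori the representation could annihilate a class in small dimension. This is dealt with either by citing the known faithfulness of the Kontsevich graph operad acting on polyvector fields, or by exhibiting a concrete finite-dimensional $V$ on which $\mathcal{V}_{1,3}$ is neither zero nor a coboundary in the relevant complex.
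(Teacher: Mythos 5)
Your proposal follows essentially the same route as the paper: the paper offers no argument for this corollary beyond computing $H^1(\mathscr{C})$ from Willwacher's theorem and the long exact sequence of the mapping cone, and then asserting that the representation $\Phi$ ``pushes'' the universal statement to polyvector fields, hedged by the word ``generically.'' Your identification of the Maurer--Cartan difference between $(\lambda,\nu,\mathcal{V})$ and the strict structure, with leading term $\mathcal{V}_{1,3}$ representing the generator of $H^1(\mathscr{C})$, is exactly the intended content. One caution on the descent step you flag yourself: linear independence of the operators $\Phi_{\Gamma}$ for large $\dim V$ gives injectivity of $\Phi$ on cochains, but since the deformation complex of the honest NCGA structure on $T_{poly}(V)$ is strictly larger than the graph-level complex, this does not by itself yield injectivity on $H^1$ (a graph cocycle could become a coboundary of a non-graphical element); closing this requires the concrete nonvanishing/non-exactness check you propose as your second option, and it is precisely this unproved point that the paper's ``generically'' is papering over.
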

\begin{corollary}
The \(A_{\infty}\) structures \(\wedge+\hbar[\pi,\,]\) and \(\nu^{\mathcal{V}(\pi)}\) on \(T_{poly}[[\hbar]]\) are, generically, not homotopic.
\end{corollary}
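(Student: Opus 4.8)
The plan is to recognise \(\wedge+\hbar[\pi,\,]_S\) as itself an induced \(A_\infty\) structure and then to isolate the first obstruction to a gauge equivalence. Let \(\mathcal{V}^{triv}\) be the strict \(NCG_\infty\) structure \(\Phi\circ f\) on \((T_{poly},[\,,\,]_S,\wedge)\) whose only nonzero component is the adjoint action \(\mathcal{V}^{triv}_{1,1}=[\,,\,]_S\); this is the ``usual'' such structure of the preceding corollary. Applying the induced-\(A_\infty\)-structure construction of the previous subsection to \(\mathcal{V}^{triv}\) and a Poisson bivector \(\pi\) gives precisely \(\nu^{\mathcal{V}^{triv}(\pi)}=\wedge+\hbar[\pi,\,]_S\) (only the \(p=1\) term survives), so the assertion is that \(\nu^{\mathcal{V}(\pi)}\) and \(\nu^{\mathcal{V}^{triv}(\pi)}\) are generically inequivalent \(A_\infty\) structures over \(\mathbf{R}[[\hbar]]\). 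One should note at the outset that functoriality of twisting only yields the implication ``\(\mathcal{V}\sim\mathcal{V}^{triv}\Rightarrow\nu^{\mathcal{V}(\pi)}\sim\nu^{\mathcal{V}^{triv}(\pi)}\)'', which is the wrong direction; so the preceding corollary is not by itself sufficient, and a genuine first-order computation is needed.

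First I would compare the two structures modulo \(\hbar^2\). Using \(\mathcal{V}_{p,1}=0\) for \(p\geq 2\), \(\mathcal{V}_{p,2}=0\), and the parity vanishing \(\mathcal{V}_{p,q}=0\) for \(p+q\) odd, one sees that \(\nu^{\mathcal{V}(\pi)}\) and \(\nu^{\mathcal{V}^{triv}(\pi)}\) agree modulo \(\hbar\) and in their \(m_1\), \(m_2\) components (both equal \(\hbar[\pi,\,]_S+\wedge\)), while their difference modulo \(\hbar^2\) is \(\hbar\) times the Hochschild cochain
\[
 c_\pi \;:=\; \textstyle\sum_{q\geq 3}\mathcal{V}_{1,q}(\pi,-^{\otimes q}),
\]
i.e.\ \(\hbar\) times the value at \(\pi\) of the linear Taylor component of \(\mathcal{V}\) minus that of \(\mathcal{V}^{triv}\). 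Absorbing the leading \(A_\infty\)-automorphism — which merely carries \(\pi\) to another generic Poisson structure — one may take a hypothetical gauge equivalence to be the identity modulo \(\hbar\); its defining equations, read off at order \(\hbar\), then force \(c_\pi\) to be a coboundary in the Hochschild complex of the graded-commutative algebra \((T_{poly},\wedge)\). Thus the corollary is equivalent to the statement that \([c_\pi]\neq 0\) in the Hochschild cohomology of \((T_{poly},\wedge)\) for generic Poisson \(\pi\).

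To prove the latter I would first reduce ``generic \(\pi\)'' to ``some \(\pi\)''. The class \([c_\pi]\) depends linearly on \(\pi\), so its vanishing locus is a linear subspace of the space of bivectors; intersected with the quadratic cone of Poisson bivectors it is therefore either everything or a proper Zariski-closed subset, and it suffices to exhibit one graded vector space \(V\) and one Poisson \(\pi\) with \([c_\pi]\neq 0\). Here I would invoke the universal computation behind the previous two corollaries: by Willwacher's theorem the class of the \(\mathcal{V}_{1,3}\)-graph-cocycle spans the one-dimensional \(H^1(\mathscr{C})\), and by the uniqueness statement of the preceding corollary \(c_\pi\) represents, up to \(d_\wedge\)-exact terms, the value at \(\pi\) of this surviving class; one then checks that the composite of the representation \(\Phi\) with evaluation at a suitable generic \(\pi\) does not annihilate it — identifying the Hochschild cohomology of \((T_{poly},\wedge)\) via Hochschild--Kostant--Rosenberg with polyvector fields on \(\Spec(T_{poly})\) and tracking the explicit graph operator \(\mathcal{V}_{1,3}\) through that identification. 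Alternatively, if one first establishes that the twisting construction is an equivalence onto its essential image, ``some \(\pi\)'' follows directly from the preceding corollary.

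The main obstacle is exactly this last non-vanishing: one must verify that the graph-complex class \(\mathcal{V}_{1,3}\), which is nonzero in the \emph{operadic} deformation complex, remains nonzero in the \emph{Hochschild cohomology of the \(\pi\)-twisted polyvector fields} for generic \(\pi\). This is the analytic and representation-theoretic heart of the statement; it parallels — but does not formally follow from — the proofs of the two previous corollaries, which concern the untwisted situation, and it is where the hypothesis ``generically'' genuinely enters. Everything else (the identification of \(\wedge+\hbar[\pi,\,]_S\) as a twist, the order-\(\hbar\) reduction, and the linearity-in-\(\pi\) argument) is routine.
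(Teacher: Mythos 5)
Your reduction is sound and, modulo phrasing, it is the same argument the paper has in mind, so I would classify this as essentially the paper's route with the bookkeeping made explicit. The paper never proves this corollary at the level of a specific \((V,\pi)\): its entire argument lives in the universal (graph) deformation complex, where Willwacher's theorem gives \(H^1(\mathscr{C})\cong H^1(\mathrm{Def}(\mathcal{A}ss_{\infty}\rightarrow\mathcal{G}raphs))/H^0(GC)\cong\mathbf{R}\), spanned by the \(\mathcal{V}_{1,3}\) graph cocycle; the quotient by \(H^0(GC)\) is exactly the gauge freedom of moving \(\pi\) that you handle by ``absorbing the leading automorphism.'' The passage from this universal non-vanishing to non-vanishing of your class \([c_{\pi}]\) in the Hochschild cohomology of \((T_{poly},\wedge)\) for an actual \((V,\pi)\) --- the step you correctly isolate as the ``analytic heart'' and do not carry out --- is precisely what the paper's qualifier \emph{generically} is there to absorb; the author explicitly concedes that it can fail for some dimensions and degenerate Maurer--Cartan elements, and offers no verification for a specific \(\pi\) either. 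So your proposal has no gap relative to the paper; on the contrary, your observations that the preceding corollary only gives the implication in the useless direction, and that the order-\(\hbar\) obstruction is \(\sum_{q\geq 3}\mathcal{V}_{1,q}(\pi,-^{\otimes q})\) with leading term \(\mathcal{V}_{1,3}\), are correct and make the paper's logic more honest. Two small cautions: the vanishing locus argument (``linear subspace meets the Poisson cone, hence all or a proper closed subset'') needs a word about irreducible components of the quadratic cone possibly lying inside the linear subspace, and the leading \(A_{\infty}\)-automorphism \(F_0\) of \((T_{poly},\wedge)\) can have higher Taylor components, so ``it merely moves \(\pi\)'' should be justified via the induced action on Hochschild cohomology (HKR) rather than asserted; both points are, however, consistent with the genericity hypothesis under which the statement is made.
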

We have to say generically because for some dimensions of the \(\mathcal{O}\)-module \(T_{poly}\) and for some degenerate Maurer-Cartan elements the corollaries might not be true.

\section{\bf{A Duflo-type theorem}}

Kontsevich's paper \cite{Kon03} contained a (somewhat sketchy) proof that the tangential morphism of his Formality map, applied to a finite dimensional Lie algebra, defined an isomorphism \(H(\mathbf{g},S(\mathbf{g}))\rightarrow H(\mathbf{g}, U(\mathbf{g}))\) of Chevalley-Eilenberg cohomology algebras. This result was later given a detailed proof and generalized to an arbitrary dg Lie algebra of finite type, see \cite{MT,PT,CR}. In this section we discuss a homotopy generalization of this theorem.

Let \(\mathbf{g}\) be a graded vector space of finite type (i.e.~finite dimensional in each degree). Let \(T_{poly}\) be the polyvector fields on \(\mathbf{g}[1]\), so \(T_{poly}=S(\mathbf{g}^*[-1])\otimes S(\mathbf{g})\). (We shall assume all tensor algebras to be completed.) Identify \(T_{poly}\) with \(\mathrm{Map}(S(\mathbf{g}[1]),S(\mathbf{g}))\). The graded Lie algebra
 \[
 \mathrm{Def}(\mathcal{L}ie,\mathbf{g})[-1]:=\mathrm{Def}(\mathcal{L}ie_{\infty}\stackrel{0}{\rightarrow}\mathcal{E}nd(\mathbf{g}))[-1]
=\mathrm{Map}(S^{\geq 1}(\mathbf{g}[1])),\mathbf{g})
 \]
embeds into \(T_{poly}\) as a Lie subalgebra. We have \(\mathcal{O}=S(\mathbf{g}^*[-1])\), and
 \[
  C(\mathcal{O},\mathcal{O})=\mathrm{Map}(\mathrm{B}(S^a(\mathbf{g}^*[-1])),S^a(\mathbf{g}^*[-1]))\cong
\mathrm{Map}(S^c(\mathbf{g}[1]),\mathrm{\Omega}(S^c(\mathbf{g}[1]))).
 \]
Here \(\mathrm{B}(\,)\) denotes the (coassociative) bar construction and \(\mathrm{\Omega}(\,)\) denotes the (associative) cobar construction.

After the above identifications the following result is a straight-forward corollary to our Main Theorem, \ref{maintheorem}.
\begin{theorem}
The representation \(\Phi\circ\omega^*:\mathcal{K}(\overline{CF}(\mathbf{H}))\rightarrow\mathcal{E}nd(T_{poly},T_{poly},\mathcal{O})\) induces an explicit quasi-isomorphism \(\mathrm{Map}(S^c(\mathbf{g}[1]),S^a(\mathbf{g}))\rightarrow\mathrm{Map}(S^c(\mathbf{g}[1]),\mathrm{\Omega}(S^c(\mathbf{g}[1])))\) of \(A_{\infty}\) algebras equipped with \(L_{\infty}\) actions by the graded Lie algebra \(\mathrm{Def}(\mathcal{L}ie,\mathbf{g})\).
\end{theorem}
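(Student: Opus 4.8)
The plan is to obtain this theorem as a direct specialization of the Main Theorem~\ref{maintheorem} to the graded vector space $V=\mathbf{g}[1]$, keeping careful track of how the various completed symmetric algebras are identified. First I would set $V=\mathbf{g}[1]$ and unwind the definitions from Section~2.4: with this choice $\mathcal{O}=\widehat{S}(V^*)=\widehat{S}(\mathbf{g}^*[-1])=S^a(\mathbf{g}^*[-1])$ and $T_{poly}=\widehat{S}(V^*\oplus V[-1])=\widehat{S}(\mathbf{g}^*[-1]\oplus\mathbf{g})=S(\mathbf{g}^*[-1])\otimes S(\mathbf{g})$, and under the standard Koszul-duality identification $S(\mathbf{g}^*[-1])\cong\mathrm{Map}(S^c(\mathbf{g}[1]),\mathbf{R})$ one gets $T_{poly}\cong\mathrm{Map}(S^c(\mathbf{g}[1]),S^a(\mathbf{g}))$. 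This is the domain of the claimed quasi-isomorphism. For the codomain I would invoke the identification already recorded in the excerpt, namely $C(\mathcal{O},\mathcal{O})=\mathrm{Map}(\mathrm{B}(S^a(\mathbf{g}^*[-1])),S^a(\mathbf{g}^*[-1]))\cong\mathrm{Map}(S^c(\mathbf{g}[1]),\mathrm{\Omega}(S^c(\mathbf{g}[1])))$, where $\mathrm{B}$ and $\mathrm{\Omega}$ are the bar and cobar constructions; this is essentially the observation that the Hochschild complex of the symmetric (Koszul) algebra $S^a(\mathbf{g}^*[-1])$ is computed by its Koszul resolution, so $\mathrm{B}(S^a(\mathbf{g}^*[-1]))\simeq S^c(\mathbf{g}[1])$ as coalgebras and dually on the target side.

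Next I would apply Theorem~\ref{maintheorem} verbatim: it produces the explicit map $Z=\{Z_{k,m}=\sum_{n\ge 0}\mathcal{Z}_{k,m,n}\}$, a quasi-isomorphism of $A_\infty$ algebras $(T_{poly},\wedge,\mathcal{V})\to(C(\mathcal{O},\mathcal{O}),d_H+\cup,br\circ\mathcal{U})$ compatible with the $L_\infty$ actions of $(T_{poly},[\,,\,]_S)$. Transporting this along the two identifications above yields an explicit $A_\infty$ quasi-isomorphism $\mathrm{Map}(S^c(\mathbf{g}[1]),S^a(\mathbf{g}))\to\mathrm{Map}(S^c(\mathbf{g}[1]),\mathrm{\Omega}(S^c(\mathbf{g}[1])))$. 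The remaining point is to restrict the $L_\infty$ action from all of $(T_{poly},[\,,\,]_S)$ to the Lie subalgebra $\mathrm{Def}(\mathcal{L}ie,\mathbf{g})=\mathrm{Map}(S^{\ge 1}(\mathbf{g}[1]),\mathbf{g})[1]$; since this is a genuine dg Lie subalgebra of $T_{poly}$ (as noted in the text, it embeds as a Lie subalgebra), an $L_\infty$ action of $T_{poly}$ restricts along the inclusion to an $L_\infty$ action of the subalgebra, and an $L_\infty$-equivariant $A_\infty$ quasi-isomorphism stays $L_\infty$-equivariant for the restricted action. So the statement follows by functoriality of restriction of $L_\infty$ actions along a strict morphism of $L_\infty$ algebras.

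The main obstacle, and the step I would spend the most care on, is making the two algebra identifications precise and compatible with one another and with the $L_\infty$ actions. On the source side one must check that the isomorphism $S(\mathbf{g}^*[-1])\otimes S(\mathbf{g})\cong\mathrm{Map}(S^c(\mathbf{g}[1]),S^a(\mathbf{g}))$ intertwines the wedge product on $T_{poly}$ with the convolution-type product on $\mathrm{Map}$, and intertwines the Schouten bracket with the corresponding bracket on maps; on the target side one must check that the HKR-type identification $C(S^a(\mathbf{g}^*[-1]),S^a(\mathbf{g}^*[-1]))\simeq\mathrm{Map}(S^c(\mathbf{g}[1]),\mathrm{\Omega}(S^c(\mathbf{g}[1])))$ carries $(d_H+\cup)$ to the natural differential-plus-product on the cobar side, and is compatible with the $T_{poly}$-action (the action induced via $\mathcal{U}$, i.e.\ the Kontsevich $L_\infty$ morphism). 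These are standard but somewhat technical Koszul-duality computations; once they are in place, the theorem is immediate. One should also double-check grading and completion conventions (the paper works throughout with completed tensor/symmetric algebras), but these cause no real difficulty because finite type of $\mathbf{g}$ guarantees the relevant dualities behave well degreewise.
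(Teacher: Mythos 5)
Your proposal follows the paper's route exactly: the paper records precisely these identifications ($T_{poly}\cong\mathrm{Map}(S^c(\mathbf{g}[1]),S^a(\mathbf{g}))$, $C(\mathcal{O},\mathcal{O})\cong\mathrm{Map}(S^c(\mathbf{g}[1]),\mathrm{\Omega}(S^c(\mathbf{g}[1])))$, and the embedding of $\mathrm{Def}(\mathcal{L}ie,\mathbf{g})$ as a Lie subalgebra of $T_{poly}$) and then declares the theorem a straightforward corollary of the Main Theorem, with the $L_{\infty}$ action restricted along that embedding. The one imprecision is your gloss of the second identification: it is a plain linear-duality isomorphism (finite type of $\mathbf{g}$ lets one dualize $\mathrm{Map}(\mathrm{B}(C^*),C^*)$ into $\mathrm{Map}(C,\mathrm{\Omega}(C))$ with the full cobar construction retained on the target), not a Koszul-resolution quasi-isomorphism replacing $\mathrm{B}(S^a(\mathbf{g}^*[-1]))$ by $S^c(\mathbf{g}[1])$ --- that reading would only produce a quasi-isomorphic model of $C(\mathcal{O},\mathcal{O})$ and would compromise the explicitness of the map.
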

As before we can, given a Maurer-Cartan element of \(\mathrm{Def}(\mathcal{L}ie,\mathbf{g})\) push this to a quasi-isomorphism of the induced A-infintity structures. The formal parameter \(\hbar\) may in the present case be discarded (set to \(1\)). It's purpose is only to define filtrations that ensure we never encounter diverging sums, but in the present case one may use weight grading by tensor lengths to define such filtrations. It is a standard argument and we omit the details.

A Maurer-Cartan element \(Q\) of \(\mathrm{Def}(\mathcal{L}ie,\mathbf{g})\) is precisely an \(L_{\infty}\) structure on \(\mathbf{g}\). Assume \(Q\) given and interpret it as a coderivation of \(S^c(\mathbf{g}[1])\), and denote the dg coalgebra \((S^c(\mathbf{g}[1]),Q)\) by \(C(\mathbf{g})\). The cobar construction
 \[
  \mathrm{\Omega}(C(\mathbf{g}))=:U_{\infty}(\mathbf{g})
 \]
is the derived universal enveloping algebra of the \(L_{\infty}\) algebra \((\mathbf{g},Q)\) introduced by V. Baranovsky in \cite{B07}. (In the special case of a dg Lie algebra it is quasi-isomorphic to the usual universal enveloping algebra.) Kontsevich's formality map \(\mathcal{U}\) quantizes \(Q\) to a differential on \(S^a(\mathbf{g}^*[-1])\). Denote \(S^a(\mathbf{g}^*[-1])\) equipped with this differential by \(C(\mathbf{g},\mathbf{R})\). We have an isomorphism of algebras
 \[
  C(C(\mathbf{g},\mathbf{R}),C(\mathbf{g},\mathbf{R}))\cong\mathrm{Map}(C(\mathbf{g}),U_{\infty}(\mathbf{g}))=:C(\mathbf{g},U_{\infty}(\mathbf{g})).
 \]
However, the induced \(A_{\infty}\) structure on \(\mathrm{Map}(S^c(\mathbf{g}[1]),S^a(\mathbf{g}))\) is not simply
 \[
  C(\mathbf{g},S(\mathbf{g}))=\mathrm{Map}(C(\mathbf{g}),S^a(\mathbf{g})).
 \]
Instead, we obtain an \(A_{\infty}\) algebra \(C(\mathbf{g},S(\mathbf{g}))_{exotic}\), which is a (generically) homotopy nontrivial deformation of \(C(\mathbf{g},S(\mathbf{g}))\). The induced \(A_{\infty}\) quasi-isomorphism is
 \[
  \mathcal{Z}^Q:C(\mathbf{g},S(\mathbf{g}))_{exotic}\rightarrow C(\mathbf{g},U_{\infty}(\mathbf{g})).
 \]
\begin{remark}
\begin{itemize}
 \item The cohomologies \(H(C(\mathbf{g},S(\mathbf{g}))_{exotic})\) and \(H(\mathbf{g},S(\mathbf{g}))\) are isomorphic as associative algebras and the map on cohomology induced by \(\mathcal{Z}^Q\) coincides, by construction, with the Duflo-Kontsevich isomorphism. Thus our theorem generalizes the Duflo-Kontsevich statement.
 \item Since \(C(\mathbf{g},S(\mathbf{g}))_{exotic}\) is, generically, not quasi-isomorphic to \(C(\mathbf{g},S(\mathbf{g}))\), but--by our theorem--is quasi-isomorphic to \(C(\mathbf{g},U_{\infty}(\mathbf{g}))\), it follows that there does not, generically, exist a quasi-isomorphism of \(A_{\infty}\) algebras
 \[
  C(\mathbf{g},S(\mathbf{g})) \rightarrow C(\mathbf{g},U_{\infty}(\mathbf{g})).
 \]
In other words, it is impossible to find a universal \(A_{\infty}\) lift of the Duflo-Kontsevich isomorphism on Chevalley-Eilenberg cohomologies to the Chevalley-Eilenberg cochain algebras.
\end{itemize}
\end{remark}
There is a canonical isomorphism between \(T_{poly}\) on \(\mathbf{g}[1]\) and \(T_{poly}\) on \(\mathbf{g}^*\). Above we used the first graded vector space, for which \(\mathcal{O}=S^a(\mathbf{g}^*[-1])\). Application of Kontsevich's formality to the second case, for which \(\mathcal{O}=S^a(\mathbf{g})\), quantizes an \(L_{\infty}\) structure \(Q\in T_{poly}\) to a (flat) \(A_{\infty}\) structure \(\star\) on \(S(\mathbf{g})[[\hbar]]\). Calaque, Felder, Ferrario and Rossi constructed in \cite{CFFR} a nontrivial but explicit \(A_{\infty}\) \((S(\mathbf{g})[[\hbar]],\star)-C(\mathbf{g},\mathbf{R})[[\hbar]]\)-bimodule structure \(K_{\hbar}\) on \(\mathbf{R}[[\hbar]]\) and they proved that the derived left action
 \[
  L:(S(\mathbf{g})[[\hbar]],\star)\rightarrow\mathrm{Map}_{\hbar}(K_{\hbar}[1]\otimes \mathrm{B}(C(\mathbf{g},\mathbf{R}))[[\hbar]],K_{\hbar}[1])
 \]
is a quasi-isomorphism of \(A_{\infty}\) algebras. One may formally set \(\hbar=1\) in this quasi-isomorphism, for essentially the same reasons as those which allowed us to do so above then identify the term on the right (above), with the cobar construction \(\mathrm{\Omega}(C(\mathbf{g}))\). Thus the result of \cite{CFFR} implies that the quantization of the symmetric algebra on the \(L_{\infty}\) algebra \(\mathbf{g}\), \((S(\mathbf{g}),\star)\), is quasi-isomorphic to Baranovsky's derived universal enveloping algebra of \(\mathbf{g}\). A detailed proof of this will be contained in \cite{AR}. Together with our result this quasi-isomorphism implies that the \(A_{\infty}\) algebras \(C(\mathbf{g},S(\mathbf{g}))_{exotic}\) and \(C(\mathbf{g},(S(\mathbf{g}),\star))\) are quasi-isomorphic, though the quasi-isomorphism is presently not explicit.

\appendix

\section{{\bf{$NCG_{\infty}$} algebras}}

Let \(\mathcal{NCG}\) be the two-colored operad generated by a degree \(-1\) Lie bracket \([x_1,x_2]\) in one color, call it \(\mathbf{x}\), an associative degree \(0\) product \(a_1\cdot a_2\) in another color, call it \(\mathbf{a}\), and an operation which we denote \(x_1\bullet a_1\), of the type \((\mathbf{x},\mathbf{a})\rightarrow\mathbf{a}\), which represents the bracket in derivations of the product. This is the operad of NCGAs.
\begin{proposition}
The operad \(\mathcal{NCG}\) is Koszul.
\end{proposition}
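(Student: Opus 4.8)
## Proof Strategy

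The plan is to exhibit $\mathcal{NCG}$ as a quadratic operad with an explicit quadratic dual, then establish Koszulness by finding a Poincaré--Birkhoff--Witt-type basis (equivalently, a quadratic Gröbner basis of the relations), or alternatively by identifying the Koszul dual cooperad with a known Koszul object and comparing generating series. Since $\mathcal{NCG}$ is two-colored, the cleanest route is probably the distributive law / Koszulness of a composite operad.

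First I would record the presentation precisely. The generators are $[x_1,x_2]$ of arity $(2;0)$ in color $\mathbf{x}$, $a_1\cdot a_2$ of arity $(0;2)$ in color $\mathbf{a}$, and $x_1\bullet a_1$ of arity $(1;1)$ with output color $\mathbf{a}$. The relations are: Jacobi for $[\,,\,]$; associativity for $\cdot$; the Leibniz/derivation identity $x\bullet(a_1\cdot a_2)=(x\bullet a_1)\cdot a_2 \pm a_1\cdot(x\bullet a_2)$; and the compatibility $[x_1,x_2]\bullet a = x_1\bullet(x_2\bullet a) \pm x_2\bullet(x_1\bullet a)$, saying $\bullet$ is a Lie action. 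All relations are quadratic, so $\mathcal{NCG}$ is a quadratic operad and Koszulness is a meaningful question.

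Second, I would observe that the pure-$\mathbf{x}$ part is $\mathcal{L}ie$ (shifted, i.e. $\mathcal{L}ie^1$, but the shift does not affect Koszulness), the pure-$\mathbf{a}$ part is $\mathcal{A}ss$, and both are classically Koszul. The operad $\mathcal{NCG}$ is built from these two by adjoining the action generator $\bullet$ subject to a distributive-law-type rewriting: the four mixed relations (Leibniz, Lie-action) let one rewrite any composite of $\bullet$'s, $\cdot$'s and $[\,,\,]$'s into a normal form where all $[\,,\,]$-operations are applied first (innermost, in the $\mathbf{x}$-inputs), then a single nested string of $\bullet$-operations, then the $\cdot$-multiplications outermost. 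I would make this precise as a (Koszul) distributive law $\mathcal{L}ie^1 \circ \mathcal{A}ss \to \mathcal{A}ss \circ \mathcal{L}ie^1$ in the appropriate two-colored sense — really an instance of the "semidirect product" construction: $\mathcal{NCG}$ is $\mathcal{A}ss \rtimes_{\bullet} \mathcal{L}ie^1$, the operad governing an associative algebra with a Lie algebra acting by derivations. By the rewriting-systems criterion (Dotsenko--Khoroshkin Gröbner bases for operads, or Markl's distributive-law theorem extended to colored operads), one checks confluence of the rewriting rules on the critical monomials — there are finitely many critical pairs, coming from: triple products $a_1\cdot a_2 \cdot a_3$ against $\bullet$; $x\bullet(x'\bullet(a_1\cdot a_2))$ mixing Leibniz and Lie-action; and the Jacobi $\times$ Lie-action overlap $[[x_1,x_2],x_3]\bullet a$. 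Each critical pair resolves to the same normal form, using Jacobi and associativity; verifying this is the technical core.

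Finally, Koszulness of each factor ($\mathcal{L}ie^1$ and $\mathcal{A}ss$) together with the distributive law implies $\mathcal{NCG}$ is Koszul, by the standard theorem that a Koszul distributive law between Koszul operads produces a Koszul composite (the bar construction of the composite is the tensor product of the bar constructions, and one reads off acyclicity degree by degree; see Loday--Vallette, Ch.~8, and its colored analogue). The quadratic dual $\mathcal{NCG}^{!}$ then has the expected form — an operad governing a commutative algebra in color $\mathbf{a}$ and a (shifted) commutative algebra in color $\mathbf{x}$ with a coaction — and the Koszul dual cooperad is exactly what is needed for the claim in the Remark identifying $\mathcal{K}(\overline{CF}(\mathbf{C}))$ with the cobar construction $\Omega(\mathcal{NCG}^{\text{!`}})$.

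I expect the main obstacle to be bookkeeping the signs and the critical-pair analysis carefully in the \emph{colored, degree-shifted} setting: the bracket has degree $-1$, so $\mathcal{L}ie^1$ is an operadic suspension of $\mathcal{L}ie$, and the Leibniz relation for $\bullet$ acquires Koszul signs that must be consistent with the Lie-action relation. Once the distributive law is set up with correct signs, confluence is a finite check and Koszulness follows formally; but getting the signs right so that the rewriting system is genuinely confluent (and not merely confluent up to sign) is where care is required.
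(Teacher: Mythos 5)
Your proposal is correct and follows essentially the same route as the paper: the paper proves Koszulity by the rewriting-systems method of Loday--Vallette, orienting the four relations as rewriting rules and checking confluence of exactly the critical monomials you identify (the pure ones handled by Koszulity of $\mathcal{A}ss$ and $\mathcal{L}ie^1$, plus $x_1\bullet((a_1\cdot a_2)\cdot a_3)$, $[x_1,x_2]\bullet(a_1\cdot a_2)$, and $[[x_1,x_2],x_3]\bullet a_1$). The distributive-law packaging you suggest is an equivalent formalization of the same normal-form/confluence verification that the paper carries out directly.
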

\begin{proof}
We shall use the rewriting systems method of \cite{LV}. The rewriting rules are
\begin{align}
(a_1\cdot a_2)\cdot a_3 &\mapsto a_1\cdot (a_2\cdot a_3) \nonumber \\
[[x_1,x_2],x_3] &\mapsto -[[x_2,x_3],x_1]-[[x_3,x_1],x_2] \nonumber \\
x_1\bullet(a_1\cdot a_2) &\mapsto (x_1\bullet a_1)\cdot a_2 + a_1\cdot(x_1\bullet a_2) \nonumber\\
[x_1,x_2] \bullet a_1 &\mapsto x_1\bullet(x_2\bullet a_1) - x_2\bullet(x_1\bullet a_1). \nonumber
\end{align}
The critical monomials are \(((a_1\cdot a_2)\cdot a_3)\cdot a_4\), \([[[x_1,x_2],x_3],x_4]\), \(x_1\bullet ((a_1\cdot a_2)\cdot a_3)\), \([x_1,x_2] \bullet(a_1\cdot a_2)\) and \([[x_1,x_2],x_3] \bullet a_1\). The first two are known to be confluent as the operads \(\mathcal{L}ie^1\) and \(\mathcal{A}ss\) are known to be Koszul. The third critical monomial can be rewritten either as
\begin{align}
x_1\bullet ((a_1a_2)a_3) &\mapsto (x_1\bullet(a_1a_2))a_3+(a_1a_2)(x_1\bullet a_3) \nonumber \\
&\mapsto ((x_1\bullet a_1)a_2)a_3+(a_1(x_1\bullet a_2))a_3+a_1(a_2(x_1\bullet a_3)) \nonumber \\
&\mapsto (x_1\bullet a_1)(a_2a_3)+a_1((x_1\bullet a_2)a_3)+a_1(a_2(x_1\bullet a_3)) \nonumber
\end{align}
or as
\begin{align}
x_1\bullet ((a_1a_2)a_3) &\mapsto x_1\bullet(a_1(a_2a_3)) \nonumber \\
&\mapsto (x_1\bullet a)(a_2a_3)+a_1(x_1\bullet(a_2a_3)) \nonumber \\
&\mapsto (x_1\bullet a_1)(a_2a_3)+a_1((x_1\bullet a_2)a_3)+a_1(a_2(x_1\bullet a_3)). \nonumber
\end{align}
Since both ways give the same end result, \(x_1\bullet ((a_1a_2)a_3)\) is confluent.

The critical monomial \([x_1,x_2] \bullet(a_1\cdot a_2)\) can be rewritten either as
\begin{align}
[x_1,x_2] \bullet(a_1a_2) &\mapsto ([x_1,x_2]\bullet a_1)a_2+a_1([x_1,x_2]\bullet a_2) \nonumber \\
&\mapsto (x_1\bullet(x_2\bullet a_1))a_2-(x_2\bullet(x_1\bullet a_1))a_2+a_1(x_1\bullet(x_2\bullet a_2))-a_1(x_2\bullet(x_1\bullet a_2)) \nonumber
\end{align}
or
\begin{align}
[x_1,x_2] \bullet(a_1a_2) &\mapsto x_1\bullet(x_2\bullet(a_1a_2))-x_2\bullet(x_1\bullet(a_1a_2)) \nonumber \\
&\mapsto x_1\bullet((x_2\bullet a_1)a_2)+x_1\bullet(a_1(x_2\bullet a_2))-x_2\bullet((x_1\bullet a_1)a_2)-x_2\bullet(a_1(x_1\bullet a_2)) \nonumber \\
&\mapsto (x_1\bullet(x_2\bullet a_1))a_2+(x_2\bullet a_1)(x_1\bullet a_2)+(x_1\bullet a_1)(x_2\bullet a_2) + a_1(x_1\bullet(x_2\bullet a_2)) \nonumber \\
&-(x_2\bullet(x_1\bullet a_1))a_2-(x_1\bullet a_1)(x_2\bullet a_2)-(x_2\bullet a_1)(x_1\bullet a_2)-a_1(x_2\bullet(x_1\bullet a_2)). \nonumber
\end{align}
These two ways to rewrite the monomial agree, so it is also confluent. Confluence of the last critical monomial, \([[x_1,x_2],x_3] \bullet a_1\), is a similar straightforward manipulation and we omit it.
\end{proof}

For a Koszul operad \(\mathcal{P}\) the operad \(\mathcal{M}or(\mathcal{P})\), whose representations are pairs of \(\mathcal{P}\)-algebras together with a morphism of \(\mathcal{P}\)-algebras between them, is again Koszul by the results of \cite{MV09}. An algebra for the resolution \(\mathrm{\Omega}(\mathcal{M}or(\mathcal{P})^{\text{!`}})\) consists in two strong homotopy \(\mathcal{P}\)-algebras and a strong homotopy morphism between them. This general machinery produces a four-colored operad \(\mathrm{\Omega}(\mathcal{M}or(\mathcal{NCG})^{\text{!`}})\) of morphisms of \(NCG_{\infty}\) algebras. It has two ``Lie-colors'' and two ``Ass-colors''. We can make it into a 3-colored operad by identifying the two Lie colors. (A representation of that new operad will be a morphism of \(NCG_{\infty}\) algebras having the same \(L_{\infty}\) algebra acting on both \(A_{\infty}\) algebras.) This operad includes generators describing an \(L_{\infty}\) endomorphism of the Lie-color. Quotient out these generators and get a new three-colored operad \(\mathcal{M}or_*(\mathcal{NCG})_{\infty}\). Its representations are morphisms of \(NCG_{\infty}\) algebras that have the same \(L_{\infty}\) algebra acting on both \(A_{\infty}\) algebras, and for which the \(L_{\infty}\) endomorphism is the identity. It is easy to see, knowing that \(\mathrm{\Omega}(\mathcal{M}or(\mathcal{NCG})^{\text{!`}})\rightarrow\mathcal{M}or(\mathcal{NCG})\) is a quasi-isomorphism, that \(\mathcal{M}or_*(\mathcal{NCG})_{\infty}\) is quasi-isomorphic to the operad \(\mathcal{M}or_*(\mathcal{NCG})\) which has as representations two NCGAs with the same Lie algebra acting on both associative algebras and a morphism between the NCGAs which is the identity on the Lie algebra. Finally one can note that, in fact, \(\mathcal{M}or_*(\mathcal{NCG})_{\infty}=\mathrm{\Omega}(\mathcal{M}or_*(\mathcal{NCG})^{\text{!`}})\).
\begin{remark}
Consider the operad \(\mathcal{NCG}^{(1)}\) of (one-colored) noncommutative Gerstenhaber algebras (chain complexes that are simultaneously a dg Lie algebra, with the bracket of degree \(-1\), and an associative algebra, and the  Lie bracket acts by derivations of the associative product). Our method to prove Koszulity of \(\mathcal{NCG}\) does not repeat mutatis mutandum for \(\mathcal{NCG}^{(1)}\). The problem is that one gets a new critical monomial, \([x_1x_2,x_3x_4]\), which is not confluent. This suggests (but does not prove) that \(\mathcal{NCG}^{(1)}\) is not Koszul.
\end{remark}
\subsection{The deformation complex of {$\mathcal{NCG}_{\infty}\rightarrow\mathcal{P}$}}
Recall that we denote the two colors of \(\mathcal{NCG}\) by \(\mathbf{x}\) and \(\mathbf{a}\). Here \(\mathbf{x}\) is the ``Lie'' color and \(\mathbf{a}\) is the ``Ass'' color. Define \(\mathcal{NCG}_{\infty}:=\mathrm{\Omega}(\mathcal{NCG}^{\text{!`}})\).

Let \(\mathcal{P}\) be a dg operad with colors \(\mathbf{x}\) and \(\mathbf{a}\) and assume given a morphism of operads \(f:\mathcal{NCG}_{\infty}\rightarrow\mathcal{P}\). We shall describe the deformation complex \(\mathrm{Def}(f)\). 

We shall simplify notation and write \(\mathcal{P}(k)\) for \(\mathcal{P}(k,0;\mathbf{x})\) and \(\mathcal{P}(p,q)\) for \(\mathcal{P}(p,q;\mathbf{a})\). As a chain complex,
 \[
  \mathrm{Def}(f) =\prod_{k\geq 2} \mathcal{P}(k)_{\Sigma_k}[2-2k] \oplus 
\prod_{\substack{p\geq 0,q\geq 1\\p+q\geq 2}} \mathcal{P}(p,q)_{\Sigma_p}\otimes sgn_q[1-2p-q].
 \]
This chain complex has a degree zero graded Lie bracket defined by taking the commutator of operadic composition. The map \(f=(f_k)+(f_{p,q})\) is a Maurer-Cartan-element and the differential on the deformation complex is the internal differential on \(\mathcal{P}\) plus the bracket \([f,\,]\). We can give a more suggestive formulation of the deformation complex as follows. The components \((f_k)\) define a morphism \(\lambda^f:\mathcal{L}ie^1_{\infty}\rightarrow\mathcal{P}\) and
 \[
  \mathrm{Def}(\lambda^f)=\prod_{k\geq 2} \mathcal{P}(k)_{\Sigma_k}[2-2k]
 \]
with differential (the internal differential on \(\mathcal{P}\) plus) \([(f_k),\,]\). Set
 \[
  {\textstyle{\int}}\mathcal{P}(q):=\prod_p\mathcal{P}(p,q)_{\Sigma_p}[-2p].
 \]
The collection \(\int\mathcal{P}=\{\int\mathcal{P}(q)\}\) has a structure of dg operad. (This can actually be interpreted as a categorical end: a \(\Sigma\)-bimodule can be regarded as a bifunctor and we take the limit over one argument.) The compositions of the Lie-color in \(\mathcal{P}\) define a right action \(\bullet\) of \(\mathrm{Def}(\lambda^f)\) on \(\int\mathcal{P}\) by operadic derivations. Add to the differential the term \([(f_{p,1}),\,]+(\,)\bullet(f_k)\). The remaining mixed components of \(f\), i.e.~\((f_{p,q})\) with \(q\geq 2\), define a morphism \(\mu^f:\mathcal{A}ss_{\infty}\rightarrow\int\mathcal{P}\) with \(\mu^f_q=(f_{p,q})_{p\geq 0}\). We have
 \[
  Def(\mu^f)=\prod_{\substack{p\geq 0,q\geq 1\\p+q\geq 2}} \mathcal{P}(p,q)_{\Sigma_p}\otimes sgn_q[1-2p-q].
 \]
The components \((f_{p,q})_{p\geq 1}\) define a map of complexes \(\rho^f:\mathrm{Def}(\lambda^f)[-1]\rightarrow\mathrm{Def}(\mu^f)\) by \(\gamma\mapsto \gamma\circ (f_{p,q})\).
\begin{remark}
The deformation complex \(\mathrm{Def}(f)\) is isomorphic as a chain complex to the mapping cone of \(\rho^f\) and as a graded Lie algebra to \(\mathrm{Def}(\lambda^f)\ltimes\mathrm{Def}(\mu^f)\).
\end{remark}
\bibliographystyle{amsplain}
\bibliography{qocha}

\providecommand{\bysame}{\leavevmode\hbox to3em{\hrulefill}\thinspace}
\providecommand{\MR}{\relax\ifhmode\unskip\space\fi MR }
\providecommand{\MRhref}[2]{%
  \href{http://www.ams.org/mathscinet-getitem?mr=#1}{#2}
}
\providecommand{\href}[2]{#2}
\begin{thebibliography}{10}

\bibitem{AR}
J.~Alm and C.A. Rossi, \emph{The {UEA} of an ${L}_{\infty}$ algebra via
  deformation quantization}, In preparation.

\bibitem{B07}
V.~Baranovsky, \emph{A universal enveloping for {L}-infinity algebras},
  Mathematical Research Letters \textbf{6} (2008), 1073--1089.

\bibitem{CFFR}
D.~Calaque, G.~Felder, A.~Ferrario, and C.A. Rossi, \emph{Bimodules and branes
  in deformation quantization}, Compositio Mathematica \textbf{147} (2011),
  105--160.

\bibitem{CR}
D.~Calaque and C.A. Rossi, \emph{Lectures on {D}uflo isomorphisms in {L}ie
  algebra and complex geometry},
  \texttt{people.mpim-bonn.mpg.de/crossi/LectETHbook.pdf}.

\bibitem{GJ}
E.~Getzler and J.D.S Jones, \emph{Operads, homotopy algebra, and iterated
  integrals for double loop spaces}, \texttt{arxiv:abs/hep-th/9403055}.

\bibitem{Ho}
E.~Hoefel, \emph{{OCHA} and the swiss-cheese operad}, Journal of Homotopy and
  Related Structures \textbf{4} (2009), no.~1, 123--151.

\bibitem{KS06}
T.~Kajiura and J.~Stasheff, \emph{Homotopy algebras inspired by classical
  open-closed string field theory}, Communications in Mathematical Physics
  \textbf{263} (2006), 553--581.

\bibitem{Kon99}
M.~Kontsevich, \emph{Operads and {M}otives in {D}eformation {Q}uantization},
  Letters in Mathematical Physics \textbf{48} (1999), no.~1, 35--72.

\bibitem{Kon03}
\bysame, \emph{Deformation {Q}uantization of {P}oisson {M}anifolds}, Letters in
  Mathematical Physics \textbf{66} (2003), 157--216.

\bibitem{LV}
J.-L. Loday and B.~Vallette, \emph{Algebraic {O}perads},
  \texttt{math.unice.fr/~brunov/Operads.pdf}.

\bibitem{MT}
D.~Manchon and C.~Torossian, \emph{Cohomologie tangente et cup-produit pour la
  quantification de {K}ontsevich}, Annales math\'ematiques Blaise Pascal
  \textbf{10} (2003), no.~1, 75--106.

\bibitem{MW}
S.~Mau and C.~Woodward, \emph{Geometric realizations of the multiplihedron and
  its complexification}, \texttt{arXiv:0802.2120v4}.

\bibitem{Mer10}
S.A. Merkulov, \emph{Operads, configuration spaces and quantization},
  \texttt{arxiv:math/1005.3381}.

\bibitem{MV09}
S.A. Merkulov and B.~Vallette, \emph{Deformation theory of representations of
  prop(erad)s}, Journal f{\"u}r die reine und angewandte Mathematik (Crelles
  Journal) \textbf{636} (2009), 125--174.

\bibitem{PT}
M.~Pevzner and C.~Torossian, \emph{Isomorphisme de {D}uflo et cohomologie
  tangentielle}, Journal of Geometry and Physics \textbf{51} (2004), no.~4,
  486--505.

\bibitem{Sh}
B.~Shoikhet, \emph{On the ${A}_{\infty}$ {F}ormality {C}onjecture},
  \texttt{arXiv:math/9809117v1}.

\bibitem{W}
T.~Willwacher, \emph{M. {K}ontsevich's graph complex and the
  {G}rothendieck-{T}eichmueller {L}ie algebra}, \texttt{arXiv:abs/1009.1654}.

\end{thebibliography}

\end{document}